 \newtheorem{theorem}{Theorem}[section]
 \newtheorem{lemma}[theorem]{Lemma}
 \newtheorem{corollary}[theorem]{Corollary}
 \newtheorem{prop}[theorem]{Proposition}
 \theoremstyle{definition}
 \newtheorem{remark}[theorem]{Remark}
 \theoremstyle{definition}
 \newtheorem{example}[theorem]{Example}
 \newtheorem{claim}[theorem]{Claim}
\newtheorem{goal}[theorem]{Goal}
\numberwithin{equation}{section}
\newtheorem*{rep@theorem}{\rep@title}
\newcommand{\newreptheorem}[2]{%
\newenvironment{rep#1}[1]{%
 \def\rep@title{#2 \ref{##1}}%
 \begin{rep@theorem}}%
 {\end{rep@theorem}}}
\newcommand{\C}{\mathbb{C}}
\newcommand{\GL}{\operatorname{GL}}
\newcommand{\D}{\mathcal{D}}
\newcommand{\X}{\mathscr{X}}
\newcommand{\Z}{\mathscr{Z}}
\newcommand{\Y}{T}
\newcommand{\IC}{IC}
\newcommand{\Ext}{\operatorname{Ext}}
\newcommand{\codim}{\operatorname{codim}}
\newcommand{\dom}{\operatorname{dom}}
\renewcommand{\mod}{\operatorname{mod}}
\newcommand{\bS}{\mathbb{S}}
\newcommand{\bw}{\bigwedge}
\newcommand{\Sym}{\operatorname{Sym}}
\renewcommand{\det}{\operatorname{det}}
\newcommand{\fD}{\mathfrak{D}}
\newcommand{\lam}{\lambda}
\newcommand{\tn}{\textnormal}
\newcommand{\defi}[1]{{\upshape\sffamily #1}}
\newcommand{\lra}{\longrightarrow}
\newcommand{\mc}[1]{\mathcal{#1}}
\title{Mixed Hodge structure on Local Cohomology with support in determinantal varieties}
\author{Michael Perlman}
\address{School of Mathematics, University of Minnesota, Minneapolis, MN, 55455}
\email{mperlman@umn.edu}
\begin{document}

\maketitle 

\begin{abstract}
We employ the inductive structure of determinantal varieties to calculate the mixed Hodge module structure of local cohomology modules with determinantal support. We show that the weight of a simple composition factor is uniquely determined by its support and cohomological degree. As a consequence, we obtain the equivariant structure of the Hodge filtration on each local cohomology module. Finally, as an application, we provide a formula for the generation level of the Hodge filtration on these modules.
\end{abstract}
\section{Introduction}

Given a smooth complex variety $X$, and a closed subvariety $Z\subseteq X$, the local cohomology sheaves $\mathcal{H}^j_{Z}(\mc{O}_X)$ are holonomic $\D_X$-modules, where $\D_X$ is the sheaf of algebraic differential operators. Furthermore, the sheaves $\mathcal{H}^j_{Z}(\mc{O}_X)$ are functorially endowed with structures of mixed Hodge modules \cite{saito90}, implying that they are equipped with two increasing filtrations: the \defi{Hodge filtration} $F_{\bullet}(\mathcal{H}^j_{Z}(\mc{O}_X))$, an infinite filtration by coherent $\mc{O}_X$-modules; and the \defi{weight filtration} $W_{\bullet}(\mathcal{H}^j_{Z}(\mc{O}_X))$, a finite filtration by holonomic $\D_{X}$-modules. 

When $Z$ is a divisor, the data of the Hodge filtration on the module $\mc{H}^1_Z(\mc{O}_X)$ is equivalent to that of the Hodge ideals \cite{MP1}. In this case, there are numerous connections between the behavior of the Hodge filtration (e.g. the jumps and generation level) and invariants of singularities arising from birational geometry, including the multiplier ideals and minimal exponent \cites{MPP2, MP3}. In the setting of higher-codimension $Z$, the Hodge filtration on local cohomology detects Du Bois singularities \cite[Theorem C]{MP4}, and it determines when a complete intersection is non-singular \cite[Corollary 9.6]{MP4}, among other things (see \cite{MP4}).

Despite the recent interest in the mixed Hodge module structure on local cohomology, very few explicit examples are known. In the higher-codimension setting there are no calculations of the Hodge filtration on local cohomology except the case when $Z$ is smooth \cite[Example 3.8]{MP4} and the case when $Z$ is defined by a monomial ideal \cite[Example 3.11]{MP4}. In fact, there are not many extensive examples of mixed Hodge module structures being understood in general, outside the case of GKZ systems \cite{gkz2, gkz}. In this article, and in previous joint work with Claudiu Raicu \cite{perlmanraicu}, we aim to explicate the Hodge and weight filtrations on local cohomology  when $Z$ is a generic determinantal variety, with the hope that such calculations will lead to insights regarding the mixed Hodge module structure of local cohomology in general.  

We let $\X=\C^{m\times n}$ be the space of $m\times n$ generic matrices, with $m\geq n$, endowed with the action of the group $\GL=\GL_m(\C)\times \GL_n(\C)$ via row and column operations. For $0\leq q \leq n$ we let $\Z_q\subseteq \X$ denote the determinantal variety of matrices of rank $\leq q$. The $\D$-module structure of the local cohomology modules $H^j_{\Z_q}(\mc{O}_{\X})$ is well understood \cite{raicu2014local, raicu2014locals, raicu2016local, lHorincz2018iterated}. In particular, their simple composition factors are known \cite[Main Theorem]{raicu2016local}, which are among $D_0,\cdots , D_n$, where $D_p=\mathcal{L}(\Z_p,\X)$ is the intersection homology module associated to the trivial local system on $\Z_p\setminus \Z_{p-1}$. With these explicit formulas in hand (recalled at the beginning of Section \ref{SecWeight}), our first result below completely describes the weight filtration.  

We write $\mc{O}_{\X}^H$ for the trivial Hodge module overlying $\mc{O}_{\X}$. Given a simple composition factor $\mc{M}$ of $H^j_{\Z_q}(\mc{O}^H_{\X})$, we say that $\mc{M}$ has weight $w$ if it is a summand of $\tn{gr}^W_wH^j_{\Z_q}(\mc{O}^H_{\X})$, where $\tn{gr}_{\bullet}^W(-)$ denotes the associated graded functor with respect to the weight filtration $W_{\bullet}$.

The following theorem demonstrates that the weight of pure Hodge module overlying a copy of $D_p$ in local cohomology is uniquely determined by its cohomological degree. 

\begin{theorem}\label{Othm}
Let $0\leq p\leq q< n\leq m$ and $j\geq 0$. If $\mc{M}$ is a simple composition factor of $H^j_{\Z_q}(\mc{O}^H_{\X})$ that overlies a copy of $D_p$, then $\mc{M}$ has weight $mn+q-p+j$. 
 \end{theorem}

The case $q=n-1$ was established in \cite[Theorem 1.3, Theorem 1.5]{perlmanraicu}, and the argument here gives an alternate proof of \cite[Theorem 1.5]{perlmanraicu} without appealing to the Decomposition Theorem. Theorem \ref{Othm} is a consequence of our main result, Theorem \ref{main}, which describes the mixed Hodge module structure on  local cohomology of any pure Hodge module overlying a simple module $D_p$.

In Section \ref{LCP} we explain the choice of Hodge structure on local cohomology implicit in our discussion. For now, we mention that it is determined functorially by pushing forward the trivial Hodge module $\mc{O}_U^H$ from the complement $U=\X\setminus \Z_q$, and so by the general theory $H^j_{\Z_q}(\mc{O}^H_{\X})$ has weight $\geq mn+j-1$ \cite[Proposition 1.7]{saito89}. Theorem \ref{Othm} demonstrates that this bound is not sharp for any $j$ in this case. It would be interesting to find sharper lower bounds in general, perhaps depending on the type of singularities that the variety possesses. From Theorem \ref{Othm} one sees an inverse relation between the weight of a composition factor and the dimension of its support. A somewhat similar correlation has been observed for weights in GKZ systems \cite[Proposition 3.6(2)]{gkz}, though weight is not completely determined by support dimension in that case. 

One may calculate the weight filtration on any local cohomology module $H^j_{\Z_q}(\mc{O}^H_{\X})$ using \cite[Main Theorem]{raicu2016local} and Theorem \ref{Othm}. In the following examples, we express each module as a class in the Grothendieck group of $\GL$-equivariant holonomic $\D$-modules (see Section \ref{equivar}).

\begin{example}\label{Ex1}
Let $m=n=4$ and $q=2$. The three nonzero local cohomology modules are:
$$
\big[H^4_{\Z_2}(\mc{O}_{\X})\big]=[D_2]+[D_1]+[D_0],\;\;\big[H^6_{\Z_2}(\mc{O}_{\X})\big]=[D_1]+[D_0],\;\;\big[H^8_{\Z_2}(\mc{O}_{\X})\big]=[D_0].
$$	
Theorem \ref{Othm} asserts that if we endow $\mc{O}_{\X}$ with the trivial pure Hodge structure, then each  $D_p$ in cohomological degree $j$ above underlies a pure Hodge module of weight $18-p+j$. Thus, the weights of the above simple composition factors are (from left to right): $20$, $21$, $22$, $23$, $24$, $26$.
\end{example}
 
We carry out a larger example on non-square matrices.

\begin{example}\label{Ex2}
Let $m=7$, $n=5$, and $q=3$. The seven nonzero local cohomology modules are:
$$
\big[H^8_{\Z_3}(\mc{O}_{\X})\big]=[D_3],\;\; \big[H^{10}_{\Z_3}(\mc{O}_{\X})\big]=[D_2],\;\;\big[H^{12}_{\Z_3}(\mc{O}_{\X})\big]=[D_2]+[D_1],
$$
$$
\big[H^{14}_{\Z_3}(\mc{O}_{\X})\big]=\big[H^{16}_{\Z_3}(\mc{O}_{\X})\big]=[D_1]+[D_0],\;\; \big[H^{18}_{\Z_3}(\mc{O}_{\X})\big]=\big[H^{20}_{\Z_3}(\mc{O}_{\X})\big]=[D_0].
$$
If $\mc{O}_{\X}$ is endowed with the trivial pure Hodge structure, then the above simple composition factors underlie Hodge modules of the following weights (from left to right): $43$, $46$, $48$, $49$, $51$, $52$, $53$, $54$, $56$, $58$.
\end{example}

One can show that the maximal weight of a composition factor of $H^{\bullet}_{\Z_q}(\mc{O}^H_{\X})$ is $2mn-q(q+1)$, the weight of $D_0$ in the largest degree in which it appears. This upper bound on weight resembles the case of GKZ systems, where weight is bounded above by twice the dimension of the relevant torus \cite[Proposition 3.6(1)]{gkz}.

We now discuss the Hodge filtration. The possible Hodge filtrations on each simple module $D_p$ are uniquely determined by weight, and are calculated in \cite[Theorem 3.1]{perlmanraicu}. As a consequence, we obtain the Hodge filtration on each local cohomology module from our knowledge of the weight filtration. The group $\GL=\GL_m(\C)\times \GL_n(\C)$ acts on $\X$, preserving each determinantal variety, and inducing the structure of a $\GL$-representation on each piece of the Hodge filtration on local cohomology with determinantal support. As such, we express the $\GL$-equivariant structure of the Hodge filtration $F_{\bullet}$ via multisets $\mathfrak{W}(F_{k}(H^j_{\Z_q}(\mc{O}_{\X})))$ of \defi{dominant weights} $\lambda=(\lam_1\geq \cdots \geq \lam_n)\in \mathbb{Z}^n$, encoding the irreducible representations that appear and their multiplicities (see Section \ref{SecGLRep}). In the following statement, we write $c_p=\codim \Z_p=(m-p)(n-p)$.

\begin{corollary}\label{hodgecor}
For $k\in \mathbb{Z}$ and $0\leq p\leq n$ we let
\begin{equation}\label{defup}
\mathfrak{D}^p_k=\big\{\lam \in \mathbb{Z}^n_{\dom}:\lam_p\geq p-n,\;\lam_{p+1}\leq p-m,\;\lam_{p+1}+\cdots +\lam_n\geq-k-c_p\big\}.
\end{equation}
Let $0\leq q< n\leq m$ and $j\geq 0$. The $k$-th piece of the Hodge filtration on local cohomology is determined by the following multiset of dominant weights:
\begin{equation}\label{hodgeH}
\mathfrak{W}\big(F_k\big(H^j_{\Z_q}\left(\mc{O}_{\X}^H\right)\big)\big)=\bigsqcup_{p=0}^q \left({\large \mathfrak{D}}^p_{k-(c_p+p-q-j)/2}\right)^{\sqcup \; a_p},
\end{equation}
where $a_p$ is the multiplicity of $D_p$ as a simple composition factor of $H^j_{\Z_q}(\mc{O}_{\X})$.
\end{corollary}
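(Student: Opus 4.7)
The plan is to combine Theorem \ref{Othm} with \cite[Theorem 3.1]{perlmanraicu}, which describes the $\GL$-equivariant Hodge filtration on each simple pure Hodge module $D_p^H$. Since every stratum of $\ol{\Z_p}$ is simply connected, the pure Hodge modules underlying $D_p$ form a Tate-twist torsor $\{D_p^H(\ell)\}_{\ell\in\mathbb{Z}}$; the weight $mn+q-p+j$ provided by Theorem \ref{Othm} uniquely pins down the twist $\ell_p$ on the $D_p$-factor inside $M := H^j_{\Z_q}(\mc{O}^H_\X)$, and unraveling the Hodge-filtration shift $F_k(D_p^H(\ell_p))=F_{k-\ell_p}(D_p^H)$ matches it against the shift $(c_p+p-q-j)/2$ appearing in (\ref{hodgeH}).

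To transfer this from individual composition factors to $M$, I would invoke the strictness of the Hodge filtration with respect to the weight filtration, a foundational property of mixed Hodge modules \cite{saito90}. Strictness yields short exact sequences
$$0\lra F_k(W_{w-1}M)\lra F_k(W_wM)\lra F_k(\gr^W_wM)\lra 0$$
of $\GL$-equivariant coherent sheaves for every $k$ and $w$. Iterating over the finite weight filtration and identifying each $\gr^W_wM$ via Theorem \ref{Othm} as a direct sum $(D_p^H(\ell_p))^{\oplus a_p}$ of Tate-twisted simples, additivity of $\GL$-characters gives $\mathfrak{W}(F_k(M))=\bigsqcup_p\mathfrak{W}(F_k(D_p^H(\ell_p)))^{\sqcup a_p}$. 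Substituting $\mathfrak{W}(F_k(D_p^H))=\mathfrak{D}^p_k$ from \cite[Theorem 3.1]{perlmanraicu} together with the twist computed in the first paragraph then produces (\ref{hodgeH}).

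For the generation level, since each $D_p^H$ is simple, its Hodge filtration is generated at its first nonzero level, which direct inspection of $\mathfrak{D}^p_k$ shows to be $k=0$ (realized by $\lambda=(p-n,\ldots,p-n,p-m,\ldots,p-m)$ with $p$ copies of $p-n$ and $n-p$ copies of $p-m$, whose tail sum equals exactly $-c_p$). Thus $D_p^H(\ell_p)\subset M$ is generated in Hodge level $(c_p+p-q-j)/2$, and a direct calculation shows $p\mapsto c_p+p=(m-p)(n-p)+p$ is strictly decreasing on $\{0,\ldots,n-1\}$ under the hypothesis $m\geq n$, so the maximum over $p$ with $a_p\neq 0$ is achieved at $p=s$. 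I would then argue that the generation level of $M$ equals this maximum: the upper bound by inductively lifting generators along the weight filtration (another strictness argument), and the lower bound from the surjections $F_k(W_wM)\twoheadrightarrow F_k(\gr^W_wM)$. Finally, in the square case $m=n$, the $\D$-module decomposition $H^j_{\Z_q}(\mc{O}_\X)\cong\bigoplus_{p=0}^q Q_p^{\oplus b_p}$ noted in the introduction, combined with the composition series $[Q_p]=[D_p]+[D_{p-1}]+\cdots+[D_0]$ derived from (\ref{filtdet})--(\ref{defQ}), forces $D_0$ to appear in every nonzero local cohomology, hence $s=0$ and $c_0=n^2$, giving $(n^2-q-j)/2$. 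The main obstacle I expect is this second application of strictness, upgrading the generation level from the pure graded pieces to the full mixed module, together with careful sign-convention bookkeeping for Tate twists to produce the precise shift $(c_p+p-q-j)/2$.
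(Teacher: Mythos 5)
Your proposal follows essentially the same strategy as the paper for the main formula (\ref{hodgeH}): combine Theorem \ref{Othm} with the equivariant description of $F_\bullet(\IC^H_{\Z_p}(k))$ from \cite[Theorem 3.1]{perlmanraicu}, and use strictness of the Hodge filtration with respect to the weight filtration to reduce the character computation to the associated graded pieces. This is precisely what the paper does in Lemma \ref{Mlemma} and Theorem \ref{Hfilthm}. The Tate-twist arithmetic you describe reproduces the shift $(c_p+p-q-j)/2$ correctly, and your identification of $\lambda=((p-n)^p,(p-m)^{n-p})$ as the unique weight appearing at level $0$ of $\fD^p_\bullet$ is correct.

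There is, however, a genuine logical gap in your generation-level argument. You write that ``since each $D_p^H$ is simple, its Hodge filtration is generated at its first nonzero level.'' Simplicity of the underlying $\D$-module only guarantees that $\D\cdot F_a(D_p^H) = D_p^H$ where $a$ is the first nonzero level; it does \emph{not} imply $F_{a+l}(D_p^H) = F_l(\D)\cdot F_a(D_p^H)$ for all $l$, which is what ``generated in level $a$'' requires. For intersection cohomology Hodge modules this is not a formal consequence of simplicity. The fact is true for the specific modules $\IC^H_{\Z_p}$, but the paper obtains it by citing the explicit computation in \cite[Section 4.2]{perlmanraicu}; your proof needs to do the same rather than invoke simplicity.

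A second, smaller imprecision: for the lower bound you invoke ``the surjections $F_k(W_wM)\twoheadrightarrow F_k(\gr^W_wM)$,'' but $W_wM$ is a submodule of $M$, so $\gr^W_wM$ is a priori only a subquotient of $M$, and failure of generation in a subquotient does not bound the generation level of $M$ from below. The argument does go through, but only because Theorem \ref{Othm} gives $D_p$ weight $mn+q-p+j$, which is decreasing in $p$: so $D_s$ (with $s$ minimal) has the \emph{highest} weight, hence sits in the top weight-graded piece, which is a genuine quotient $M/W_{w_{\max}-1}M$ of $M$. You should make this explicit. (The paper instead argues directly that $D_s$ is a quotient of $H^j_{\Z_q}(D_p)$ using semi-simplicity when $m\neq n$ and the $\tn{add}(Q)$ structure when $m=n$.) With these two points patched, the proposal matches the paper's proof.
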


The case $q=n-1$ appeared in \cite{perlmanraicu}, where it was used to calculate the Hodge ideals for the determinant hypersurface. In Section \ref{SecHodgeFilt}, we elaborate on Corollary \ref{hodgecor} and explain how to deduce it from Theorem \ref{Othm} and \cite{perlmanraicu}. We say more about the $\mc{O}_{\X}$-module structure of the Hodge filtration in Remark \ref{ClaudiuRemark} and Remark \ref{HodgeIdealRemark}. 

For now, we discuss what one may deduce about the Hodge filtration from Corollary \ref{hodgecor}. Each $\fD^p_k$ in (\ref{hodgeH}) arises from the induced Hodge filtration on a composition factor $D_p$, and Corollary \ref{hodgecor} asserts that the filtration on each $D_p$ in cohomological degree $j$ is the same. It follows from (\ref{defup}) and (\ref{hodgeH}) that the first nonzero level of Hodge filtration on each $D_p$ is $(c_p+p-q-j)/2$. In particular, as cohomological degree increases, the starting level of Hodge filtration on each $D_p$ decreases. As the sets $\fD^p_k$ and $\fD^{r}_l$ are disjoint for all  $p\neq r$ and all indices $k,l$, there is no ambiguity in (\ref{hodgeH}), and it completely describes the Hodge filtration on a local cohomology module. 

As an application, we determine the generation level of the Hodge filtration (see Section \ref{SecGenLev}).
\begin{corollary}\label{newgenlevel}
If $s$ is minimal such that $a_{s}\neq 0$ in (\ref{hodgeH}), then the generation level of the Hodge filtration on $H^j_{\Z_q}(\mc{O}^H_{\X})$ is $(c_{s}+s-q-j)/2$. If $m=n$, then $s=0$, so the generation level is $(n^2-q-j)/2$.	
\end{corollary}

Below is an example which illustrates Corollary \ref{hodgecor} and Corollary \ref{newgenlevel}.

\begin{example}
Continuing Example \ref{Ex1}, for $k\in \mathbb{Z}$ we have	
$$
\mathfrak{W}\big(F_k\big(H^4_{\Z_2}\big( \mc{O}^H_{\X}\big)\big)\big)=\fD^2_k\sqcup \fD^1_{k-2}\sqcup \fD^0_{k-5},\;\;\mathfrak{W}\big(F_k\big(H^6_{\Z_2}\big( \mc{O}^H_{\X}\big)\big)\big)=\fD^1_{k-1}\sqcup \fD^0_{k-4},
$$
$$
\mathfrak{W}\big(F_k\big(H^8_{\Z_2}\big( \mc{O}^H_{\X}\big)\big)\big)=\fD^0_{k-3}.
$$
The induced filtrations on each of the simple composition factors start in the following levels (from left to right): $0$, $2$, $5$, $1$, $4$, $3$, and the generation level of the Hodge filtration on each of the three nonzero local cohomology modules is $5$, $4$, $3$, respectively.
\end{example}

\subsection*{Strategy and Organization} We summarize our strategy to prove Theorem \ref{main} (and Theorem \ref{Othm}). The proof proceeds by induction on $n\geq 1$.

\noindent \textbf{Step 1.} In Section \ref{SecIndHodge} we employ the inductive structure of determinantal varieties to relate the mixed Hodge module structure of local cohomology on $\X$ with support in $\Z_q$ to that on smaller matrices $\C^{(m-1)\times (n-1)}$ with support matrices of rank $\leq q-1$. By the inductive hypothesis, this allows us to reduce to the problem of verifying that each copy of $D_0$ in  local cohomology underlies a Hodge module of the desired weight.

\noindent \textbf{Step 2.} In Section \ref{SecProof} we proceed by induction on $q\geq 0$, completing the inductive step by examining weights in some Grothendieck spectral sequences for local cohomology.

In Section \ref{SecHodgeFilt} we discuss how to deduce the Hodge filtration on each local cohomology module from Theorem \ref{main} and \cite{perlmanraicu}. As an application, we determine the generation level.


\section{Preliminaries}\label{Prelim}

In this section we establish notation, and review some relevant background regarding functors on Hodge modules, local cohomology, and equivariant $\D$-modules. All of our $\D$-modules are left $\D$-modules. 
\subsection{$\D$-modules, Hodge modules, and functors}\label{functors}Let $X$ be a smooth complex variety of dimension $d_{X}$, with sheaf of algebraic differential operators $\D_{X}$. We write $\tn{D}_h^b(\D_{X})$ for the bounded derived category of holonomic  $\D_{X}$-modules, and we write $\tn{MHM}(X)$ for the category of algebraic mixed Hodge modules on $X$ (see \cite[Section 8.3.3]{hotta2007d}), with $\tn{D}^b\tn{MHM}(X)$ the corresponding bounded derived category. 

Given an irreducible closed subvariety $Z\subseteq X$, we write $\mc{L}(Z,X)$ for the \defi{intersection homology $\D$-module} associated to the trivial local system on the regular locus $Z_{\tn{reg}}\subseteq Z$ \cite[Definition 3.4.1]{hotta2007d}. 

We write $\IC^H_Z$ for the pure Hodge module associated to the trivial variation of Hodge structure on $Z_{\tn{reg}}$ \cite[Section 8.3.3(m13)]{hotta2007d}, which has weight $d_Z$. For a mixed Hodge module $\mc{M}=(M,F_{\bullet},W_{\bullet})$ and $k\in \mathbb{Z}$, we write $\mc{M}(k)=(M,F_{\bullet-k},W_{\bullet+2k})$ for its $k$-th \defi{Tate twist} \cite[Section 8.3.3(m5)]{hotta2007d}. For example, $\IC^H_Z(k)$ has weight $d_Z-2k$. The modules $\IC^H_Z(k)$ provide a complete list of polarizable pure Hodge modules that may overlie the $\D$-module $\mc{L}(Z,X)$ \cite[Section 8.3.3(m13)]{hotta2007d}.  

 Let $f:X\to Y$ be a morphism between smooth complex varieties, and let $M\in \tn{D}_h^b(\D_{X})$ and $N\in \tn{D}_h^b(\D_{Y})$. We write the following for the direct and inverse image functors for $\D$-modules \cite[Chapter 1.5]{hotta2007d}:
$$
f_{+}(M):=\mathbb{R}f_{\ast}(\D_{Y\leftarrow X}\otimes^{\mathbb{L}}M), \;\;\tn{and}\;\; f^{\dagger}(N):=\D_{X \to Y}\otimes^{\mathbb{L}} f^{-1}N[d_{X}-d_{Y}],
$$

where $\D_{Y\leftarrow X}$ and $\D_{X \to Y}$ denote the corresponding transfer bimodules. These functors induce functors on the bounded derived categories of mixed Hodge modules \cite[Section 8.3.3(m7)]{hotta2007d}, denoted  as follows:
$$
f_{\ast}:\tn{D}^b\tn{MHM}(X)\to \tn{D}^b\tn{MHM}(Y),\;\;\tn{and}\;\; f^{!}:\tn{D}^b\tn{MHM}(Y)\to \tn{D}^b\tn{MHM}(X).
$$

Given $\mc{M}\in \tn{D}^b\tn{MHM}(X)$, we say that $\mc{M}$ is mixed of weight $\leq w$ (resp. $\geq w$) if $\tn{gr}^W_i(\mc{H}^j(\mc{M}))=0$ for $i>j+w$ (resp. $i<j+w$). We say $\mc{M}$ is pure of weight $w$ if it is mixed of weight $\leq w$ and $\geq w$.

\subsection{Local cohomology as a mixed Hodge module }\label{LCP} Let $X$ be a smooth complex variety and let $Z\subseteq X$ be a closed subvariety. We write $\mathbb{R}\mc{H}^0_{Z}(-)$ for the functor on $\tn{D}^b_{h}(\D_X)$ of sections with support in $Z$, whose cohomology functors $\mc{H}^i_{Z}(-)$ are the local cohomology functors with support in $Z$.

We set $U=X\setminus Z$ with open immersion $j:U\to X$. Given $M\in \tn{D}_h^b(\D_X)$, there is a distinguished triangle in $\tn{D}^b_h(\D_X)$ \cite[Proposition 1.7.1(i)]{hotta2007d}:
\begin{equation}\label{triangle}
\mathbb{R}\mc{H}^0_{Z}(M)\longrightarrow M \longrightarrow j_{+}j^{\dagger}(M)\overset{+1\;}\longrightarrow.
\end{equation}
If $M$ underlies $\mc{M}\in \tn{D}^b\tn{MHM}(X)$, then $j_+j^{\dagger}(M)$ underlies $j_{\ast}j^{!}(\mc{M})\in \tn{D}^b\tn{MHM}(X)$, so this triangle endows $\mathbb{R}\mc{H}^0_{Z}(\mc{M})$ with the structure of an object in $\tn{D}^b\tn{MHM}(X)$. In particular, if $\mc{M}\in \tn{MHM}(X)$ then we have an exact sequence of mixed Hodge modules:
$$
0\longrightarrow \mc{H}^0_{Z}(\mc{M})\longrightarrow \mc{M}\longrightarrow \mc{H}^0(j_{\ast}j^!(\mc{M}))\longrightarrow \mc{H}^1_{Z}(\mc{M})\longrightarrow 0,
$$
and isomorphisms $\mc{H}^{q}_{Z}(\mc{M})\cong \mc{H}^{q-1}(j_{\ast}j^!(\mc{M}))$ for $q\geq 2$. For more information, see \cite[Section B.3]{MP4}.

When $X=\C^N$ is an affine space, we identify all of the above sheaves with their global sections, and view everything as a module over the Weyl algebra $\D=\Gamma(\C^N,\D_{\C^N})$. For ease of notation throughout, we write
$$
\mathbb{R}\Gamma_Z(-):=\mathbb{R}\Gamma_Z(\C^N,-),\;\;\tn{and}\;\; H^j_Z(-):=H^j_Z(\C^N,-),
$$
where $\Gamma_Z=\Gamma\circ \mathcal{H}^0_Z$, and $H^j_Z=\mathbb{R}^j\Gamma_Z$ are the global local cohomology functors.

\subsection{$\GL$-equivariant $\D$-modules on $\C^{m\times n}$}\label{equivar} We let $\X=\C^{m\times n}$ be the space of $m\times n$ generic matrices, with $m\geq n$. This space is endowed with an action of the group $\GL=\GL_m(\C)\times \GL_n(\C)$ via row and column operations, and the orbits stratify $\X$ by matrix rank. 

All $\D$-modules considered in this work are objects in the category $\tn{mod}_{\GL}(\D)$ of $\GL$-equivariant holonomic $\D$-modules. The simple objects in this category are the modules
$$
D_0,\;D_1,\cdots,\; D_n,
$$
where $D_p=\mc{L}(\Z_p,\X)$ is the intersection homology $\D$-module associated to $\Z_p$.

Given $M\in \tn{mod}_{\GL}(\D)$ and $0\leq q\leq n$, the local cohomology modules $H^j_{\Z_q}(M)$ are also objects of $\tn{mod}_{\GL}(\D)$, and thus have composition factors among $D_0,\cdots , D_n$. When $m\neq n$, the category $\tn{mod}_{\GL}(\D)$ is semi-simple \cite[Theorem 5.4(b)]{categories}, so each local cohomology module decomposes as a $\D$-module into a direct sum of its simple composition factors. For instance, each local cohomology module in Example \ref{Ex2} is semi-simple.

On the other hand, for square matrices, the category $\tn{mod}_{\GL}(\D)$ is not semi-simple \cite[Theorem 5.4(a)]{categories}. We let $m=n$, and let $S=\C[x_{i,j}]_{1\leq i,j\leq n}$ denote the ring of polynomial functions on $\X$. The localization $S_{\det}$ of the polynomial ring at the $n\times n$ determinant $\det=\det(x_{i,j})$ is a holonomic $\D$-module, with composition series as follows \cite[Theorem 1.1]{raicu2016characters}:
\begin{equation}\label{filtdet}
0 \subsetneq S \subsetneq \langle \det^{-1}\rangle_{\D}\subsetneq \langle \det^{-2}\rangle_{\D}\subsetneq\cdots \subsetneq \langle \det^{-n}\rangle_{\D}=S_{\det},
\end{equation}
where $\langle \det^{-p} \rangle_{\D}$ is the $\D$-submodule of $S_{\det}$ generated by $\det^{-p}$, and $\langle \det^{-p}\rangle_{\D}/\langle \det^{-p+1}\rangle_{\D}\cong D_{n-p}$. Following \cite{lHorincz2018iterated}, we define $Q_n=S_{\det}$, and for $p=0,\cdots ,n-1$, we set
\begin{equation}\label{defQ}
Q_p=\frac{S_{\det}}{\langle \det^{p-n+1}\rangle_{\D}}.	
\end{equation}
The modules $Q_p$ constitute the indecomposable summands of local cohomology with determinantal support \cite[Theorem 1.6]{lHorincz2018iterated}. Indeed, let us denote by $\tn{add}(Q)$ the additive subcategory of $\tn{mod}_{\GL}(\D)$ consisting of modules that are isomorphic to a direct sum of the modules $Q_0,\cdots , Q_n$. By \cite[Theorem 1.6]{lHorincz2018iterated}, 
 if $q<p$, each local cohomology module of the form $H^j_{\Z_q}(D_p)$ belongs to $\tn{add}(Q)$. For instance, in Example \ref{Ex1} the three  local cohomology modules are isomorphic as $\D$-modules to $Q_2$, $Q_1$, and $Q_0$, respectively. 

\subsection{Subrepresentations of equivariant $\D$-modules}\label{SecGLRep} For an integer $N\geq 1$, the irreducible representations of the general linear group $\GL_N(\C)$ are in one-to-one correspondence with \defi{dominant weights} 
$$
\lam=(\lam_1\geq \lam_2\geq \cdots \geq \lam_N)\in \mathbb{Z}^N.
$$
We write $\mathbb{Z}^N_{\tn{dom}}$ for the set of dominant weights, and $\bS_{\lam}\C^N$ for the irreducible representation corresponding to a dominant weight $\lam$, where $\bS_{\lam}$ is a \defi{Schur functor}. For $b\in \mathbb{Z}$ and $a\geq 0$ we write $(b^a)=(b,\cdots ,b,0,\cdots,0)$ for the dominant weight with $b$ repeated $a$ times. For instance, $\bS_{(d)}\C^N=\Sym^d\C^N$ and $\bS_{(1^d)}\C^N=\bw^d \C^N$.

For $0\leq p\leq n$ the module $D_p$ decomposes into irreducible $\GL$-representations as follows \cite[Section 5]{raicu2016characters}:
\begin{equation}\label{chardp}
D_p=\bigoplus_{\lam\in W^p}\bS_{\lam(p)}\C^m\otimes \bS_{\lam}\C^n,
\end{equation}
where 
$$
\lam(p)=(\lam_1,\cdots , \lam_p, \underbrace{p-n,\cdots ,p-n}_{m-n},\lam_{p+1}+(m-n),\cdots,\lam_n+(m-n)),
$$
 and
$$
W^p:=\big\{\lam \in \mathbb{Z}^n_{\dom}:\lam_p\geq p-n,\;\lam_{p+1}\leq p-m\big\}.
$$
If $V$ is a subrepresentation of a $\GL$-equivariant holonomic $\D$-module, then it is a subrepresentation of a finite direct sum of the modules $D_0,\cdots ,D_n$. Thus $V$ has a $\GL$-decomposition of the following form:
$$
V=\bigoplus_{0\leq p\leq n}  \bigoplus_{\lam \in W^p} \big( \bS_{\lam(p)}\C^m \otimes \bS_{\lam}\C^n\big)^{\oplus b_{\lam}(V)}.
$$
We encode the equivariant structure of such a $V$ via a multiset of dominant weights
$$
\mathfrak{W}(V)=\left\{ (\lam, b_{\lam}(V)): \lam\in \mathbb{Z}^n_{\dom}\right\}.
$$
Since the sets $W^p$ are pairwise disjoint, and $\lam(p)$ is uniquely determined by $\lam$, the multiset $\mathfrak{W}(V)$ completely describes the equivariant structure of $V$. Weights of direct sums are described by disjoint unions:
$$
\mathfrak{W}(V_1\oplus V_2)=\mathfrak{W}(V_1)\sqcup \mathfrak{W}(V_2)=\left\{ (\lam, b_{\lam}^1+b_{\lam}^2): (\lam, b_{\lam}^1)\in \mathfrak{W}(V_1),\; (\lam, b_{\lam}^2)\in \mathfrak{W}(V_2)\right\}.
$$
When $V$ is multiplicity-free (i.e. $b_{\lam}(V)\leq 1$ for all $\lam$), we simply write $\mathfrak{W}(V)$ as a set of dominant weights. For example, $\mathfrak{W}(D_p)=W^p$ for $p=0,\cdots , n$. Since $Q_p$ has simple composition factors $D_0,\cdots , D_p$, each with multiplicity one, we have
\begin{equation}\label{characterofqp}
\mathfrak{W}(Q_p)=\mathfrak{W}(D_0\oplus \cdots \oplus D_p)=W^0\sqcup \cdots \sqcup W^p.
\end{equation}
In particular, $\mathfrak{W}(Q_n)=\mathbb{Z}^n_{\dom}$. 

\subsection{Hodge modules on $\C^{m\times n}$}\label{hodged} We write $\IC^H_{\Z_p}$ for the pure Hodge module associated to the trivial variation of Hodge structure on $\Z_p\setminus \Z_{p-1}$, which has weight $d_p=\dim \Z_p=p(m+n-p)$, and overlies the simple $\D$-module $D_p$. Given $k\in \mathbb{Z}$, the $k$-th Tate twist of $\IC^H_{\Z_p}$, written $\IC^H_{\Z_p}(k)$, is pure of weight $d_p-2k$. 

We restrict our attention to  the case of square matrices, the situation when our local cohomology modules of interest belong to $\tn{add}(\mc{Q})$ (see Section \ref{equivar}). We classify the possible mixed Hodge module structures on the $\D$-modules $Q_p$. Let $\Z=\Z_{n-1}$ denote the determinant hypersurface, and we write $U=\X\setminus \Z$ with open immersion $j:U\to \X$. We define $\mc{Q}^H_n:=j_{\ast}\mc{O}_U^H$, where $\mc{O}^H_U$ is the trivial pure Hodge module on $U$. Up to a Tate twist, $\mc{Q}^H_n$ is the unique mixed Hodge module that may overlie $Q_n$ \cite[Section 4.1]{perlmanraicu}.

The weight filtration $W_{\bullet}$ on $\mc{Q}_n^H$ is described as follows: if $w<n^2$ or $w>n^2+n$, then $\tn{gr}^W_w\mc{Q}_n^H=0$, and 
\begin{equation}\label{weightQn}
\tn{gr}^W_{n^2+n-p}\mc{Q}_n^H=\IC^H_{\Z_p}\left(-\binom{n-p+1}{2}\right),\;\; \tn{for\;\; $p=0,\cdots , n$.}	
\end{equation}
In other words, the copy of $D_p$ in $\mc{Q}_n^H$ underlies a pure Hodge module of weight $n^2+n-p$. Using (\ref{weightQn}) we define a mixed Hodge module structure on each $Q_p$ for $0\leq p\leq n-1$ as follows. Consider the exact sequence
\begin{equation}
0\longrightarrow W_{n^2+n-p-1}(\mc{Q}_n^H)\longrightarrow \mc{Q}_n^H \longrightarrow \mc{Q}_n^H/W_{n^2+n-p-1}(\mc{Q}_n^H)\longrightarrow 0.	
\end{equation}
By (\ref{defQ}) and (\ref{weightQn}), it follows that $\mc{Q}_n^H/W_{n^2+n-p-1}(\mc{Q}_n^H)$ is a mixed Hodge module overlying $Q_p$. We define
\begin{equation}\label{defQph}
\mc{Q}^H_p=\frac{\mc{Q}_n^H}{W_{n^2+n-p-1}(\mc{Q}_n^H)},\;\tn{for\; $p=0,\cdots,n-1$}.
\end{equation}

\begin{prop}\label{uniqueQp}
Up to a Tate twist, $\mc{Q}_p^H$ is the only mixed Hodge module overlying $Q_p$.	
\end{prop}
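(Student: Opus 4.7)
The plan is to proceed by induction on $p$, from $p=0$ up to $p=n-1$.

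For the base case $p=0$, the module $Q_0=D_0=\mc{L}(\{0\},\X)$ is simple, so every mixed Hodge module overlying it is a Tate twist of the pure Hodge module $\mc{Q}_0^H$, and the claim is immediate.

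For the inductive step, assume the result for $Q_{p-1}$, with $1\leq p\leq n-1$, and let $\mc{M}$ be a mixed Hodge module overlying $Q_p$. The composition series (\ref{filtdet}) shows that $D_p$ is the unique simple $\D$-submodule of $Q_p$, so any simple MHM sub of $\mc{M}$ must overlie $D_p$. In particular, the MHM socle of $\mc{M}$ is a simple MHM $\mc{D}_p=\IC^H_{\Z_p}(k_p)$ for some $k_p\in\mathbb{Z}$, and the quotient $\mc{M}/\mc{D}_p$ overlies $Q_{p-1}$. By the inductive hypothesis, $\mc{M}/\mc{D}_p\cong\mc{Q}_{p-1}^H(l)$ for some $l\in\mathbb{Z}$; Tate-twisting $\mc{M}$ by $-l$ reduces us to $l=0$.

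It remains to show that $k_p=-\binom{n-p+1}{2}$ and that the resulting MHM extension $0\to\IC^H_{\Z_p}(k_p)\to\mc{M}\to\mc{Q}_{p-1}^H\to 0$ is, uniquely up to isomorphism, $\mc{Q}_p^H$. Its class in $\tn{Ext}^1_{\tn{MHM}}(\mc{Q}_{p-1}^H,\IC^H_{\Z_p}(k_p))$ must map to the nonzero generator of $\tn{Ext}^1_{\D}(Q_{p-1},D_p)\cong \mathbb{C}$ (else $\mc{M}$ splits, contradicting indecomposability of $Q_p$). The $\D$-module extension class is detected on the socle $\IC^H_{\Z_{p-1}}(-\binom{n-p+2}{2})$ of $\mc{Q}_{p-1}^H$, so the obstruction reduces to the ``adjacent'' MHM-Ext group $\tn{Ext}^1_{\tn{MHM}}(\IC^H_{\Z_{p-1}}(-\binom{n-p+2}{2}),\IC^H_{\Z_p}(k_p))$ surjecting onto $\tn{Ext}^1_{\D}(D_{p-1},D_p)$.

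The main obstacle is showing that this surjection is possible only for $k_p=-\binom{n-p+1}{2}$, and that in that case the MHM extension class is unique up to scalar. Strictness of the weight filtration under MHM morphisms gives the inequality $k_p\geq -\binom{n-p+1}{2}$ (the weight of $D_p$ must be strictly smaller than that of $D_{p-1}$ in the nonsplit subquotient), but pinning down equality requires additional Hodge-theoretic input. To obtain it, I would pursue one of two routes: (i) directly compute the MHM-Ext via restriction to a transverse slice of the smooth stratum $\Z_{p-1}\setminus\Z_{p-2}$, where the normal geometry is a smaller determinantal cone with explicit Hodge weights; or (ii) lift $\mc{M}$ through the $\D$-module sequence $0\to\langle\det^{p-n+1}\rangle_{\D}\to Q_n\to Q_p\to 0$ to a MHM overlying $Q_n$ and invoke the uniqueness of $\mc{Q}_n^H$ up to Tate twist (\cite{perlmanraicu}, Section 4.1), which then identifies $\mc{M}$ with the weight-filtration quotient $\mc{Q}_p^H(k)$ of the lift. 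Once $k_p$ is thereby fixed to its extremal value, the extension class is forced into a single isomorphism class, completing the induction.
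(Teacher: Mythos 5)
Your proposal sets up a reasonable inductive framework, but it has a genuine gap at the step you yourself identify as the ``main obstacle,'' and neither of the two routes you sketch clearly closes it.

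The socle analysis combined with the fact that for pure Hodge modules $\tn{Ext}^1_{\tn{MHM}}$ vanishes unless the sub has strictly smaller weight than the quotient does yield the upper bound $w_p \leq n^2+n-p$ (equivalently $k_p\geq -\binom{n-p+1}{2}$) that you assert. However, you do not establish the reverse inequality $k_p = -\binom{n-p+1}{2}$, nor the uniqueness of the extension class once $k_p$ is fixed, and the argument cannot be completed without it: a priori there could be several admissible Tate twists $k_p < -\binom{n-p+1}{2}$ admitting non-split lifts. Route (ii) does not obviously work: given a mixed Hodge module $\mc{M}$ overlying $Q_p$, there is no reason it should lift through the $\D$-module surjection $Q_n\to Q_p$ to a mixed Hodge module overlying $Q_n$. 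The localization description $Q_n=j_{\ast}j^{\dagger}\mc{O}_{\X}$ along the complement of the hypersurface does not help, since $j^{\dagger}(Q_p)=0$ for $p<n$, and such a lifting in $\tn{MHM}$ is in general obstructed. Route (i) is not carried out. The paper instead follows the same strategy as \cite[Section 4.1]{perlmanraicu}, which pins down the Tate twist by analyzing the equivariant structure of the Hodge filtration directly; because $Q_p$ for $p<n$ fails to have full support, the ingredient \cite[Equation (2.11)]{perlmanraicu} is replaced by Lemma \ref{QpHlemma}, which uses the $S$-module structure of $F_\bullet(Q_p)$ together with the fact that $D_p$ is the socle of $Q_p$ to force the Hodge filtration to behave as in the $Q_n$ case. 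If you want to pursue your weight-filtration strategy, you would need to actually compute the relevant $\tn{Ext}^1_{\tn{MHM}}$ group between the adjacent graded pieces, or supply a Hodge-theoretic argument of the kind used in Lemma \ref{QpHlemma}.
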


The proof of Proposition \ref{uniqueQp} is identical to the proof for the case $Q_n$ in \cite[Section 4.1]{perlmanraicu}, except that $Q_p$ for $p<n$ does not have full support, so \cite[Equation (2.11)]{perlmanraicu} cannot be used. However, this is remedied via the following lemma. We write $\delta^p=((p-n)^n)\in \mathfrak{W}(D_p)$.

\begin{lemma}\label{QpHlemma}
Let $1\leq p\leq n$ and consider a mixed Hodge module overlying $Q_p$ with Hodge filtration $F_{\bullet}$. Given $1\leq r\leq p-1$, if $\delta^r\in \mathfrak{W}(F_{l}(Q_p))$ for some $l\in \mathbb{Z}$, then $\delta^r+(1^{r+1})\in \mathfrak{W}(F_l(Q_p))$.	
\end{lemma}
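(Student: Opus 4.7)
The plan is to exhibit an explicit vector in $F_l(Q_p)$ whose $\GL$-weight is $\delta^r+(1^{r+1})$, obtained by multiplying an appropriate weight-$\delta^r$ element by an $(r+1)\times(r+1)$ minor of $(x_{i,j})$. Since multiplication by elements of $\mc{O}_{\X}=F_0\D_{\X}$ preserves each piece of the Hodge filtration, the problem reduces to two essentially combinatorial checks: (i) identifying a concrete vector of weight $\delta^r$ sitting inside $F_l(Q_p)$, and (ii) verifying that its product with a suitable minor is nonzero in $Q_p$.

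First I would show that the $\delta^r$-isotypic component of $Q_p$ is one-dimensional, spanned by the image of $\det^{r-n}\in S_{\det}$ in the quotient $Q_p=Q_n/\langle\det^{p-n+1}\rangle_{\D}$. The disjointness of the sets $W^0,\ldots,W^p$ together with $\delta^r\in W^r$ implies multiplicity one, and $\det^{r-n}$ provides a nonzero class because $\delta^r$ fails the condition $\lambda_{p+1}\ge p-n+1$ appearing in (\ref{eqforIdeal}). Since the Hodge filtration on the given equivariant mixed Hodge module is $\GL$-stable, the hypothesis $\delta^r\in\mathfrak{W}(F_l(Q_p))$ then forces the class of $\det^{r-n}$ to lie in $F_l(Q_p)$.

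Next, I would take $\mu$ to be the determinant of the upper-left $(r+1)\times(r+1)$ submatrix of $(x_{i,j})$, which serves as a highest weight vector of the irreducible subrepresentation $\bS_{(1^{r+1})}\C^m\otimes\bS_{(1^{r+1})}\C^n\subseteq S$. Because $\mu\in\mc{O}_{\X}$, the product $\mu\cdot\det^{r-n}$ again lies in $F_l(Q_p)$. Its $\GL_m\times\GL_n$-weight (on both factors) equals $\delta^r+(1^{r+1})$, and a second application of (\ref{eqforIdeal}) -- using that $p+1>r+1$ forces $\lambda_{p+1}=r-n<p-n+1$ -- shows that this product remains nonzero in $Q_p$. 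Thus it is a highest weight vector generating the entire $(\delta^r+(1^{r+1}))$-isotypic component, and the $\GL$-stability of $F_l(Q_p)$ delivers this component into $F_l(Q_p)$, proving the lemma.

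The only point requiring genuine care will be the nonvanishing of $\mu\cdot\det^{r-n}$ in $Q_p$: this is exactly where the assumption $r\le p-1$ is used, since one needs the new weight to land in $W^{r+1}\subseteq\mathfrak{W}(Q_p)$ rather than be swallowed by $\langle\det^{p-n+1}\rangle_{\D}$. Everything else is a direct consequence of $\mc{O}_{\X}\cdot F_l\subseteq F_l$ combined with the weight-combinatorics of $Q_p$ packaged by (\ref{eqforIdeal}) and the character formula (\ref{chardp}).
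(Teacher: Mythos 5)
Your proposal is correct, and it reaches the conclusion by a different route than the paper at the one nontrivial step. Both proofs share the skeleton: produce an element in the $\delta^r$-isotypic line of $Q_p$ (which lies in $F_l(Q_p)$ by the $\GL$-stability of the Hodge filtration), multiply by an $(r+1)\times(r+1)$ minor (which preserves $F_l$ since minors lie in $F_0\mathcal{D}$), and argue the product is nonzero so that its isotypic component sits in $F_l(Q_p)$. The paper handles the nonvanishing abstractly: it takes an arbitrary nonzero vector $m$ in the $\delta^r$-line, multiplies it by the full space $\bigwedge^{r+1}\C^n\otimes\bigwedge^{r+1}\C^n$ of $(r+1)$-minors, and observes that if this product were zero, the $S$-submodule generated by $m$ would have support in $\Z_r$, forcing $H^0_{\Z_r}(Q_p)\neq 0$ and thus a nonzero $\mathcal{D}$-submodule supported in $\Z_r$, which contradicts the fact that $D_p$ (with support $\Z_p$, and $r\leq p-1$) is the socle of $Q_p$. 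You instead identify the $\delta^r$-line explicitly as the class of $\det^{r-n}$, multiply by a single highest weight minor $\mu$, and use that $S_{\det}$ is $\GL$-multiplicity-free together with the character of $\langle\det^{p-n+1}\rangle_{\mathcal{D}}$ in (\ref{eqforIdeal}) to conclude that the quotient map is injective on the $(\delta^r+(1^{r+1}))$-isotypic component, since $(\delta^r+(1^{r+1}))_{p+1}=r-n<p-n+1$. Your argument is more explicit and avoids the socle lemma of L\H{o}rincz--Raicu (replacing a $\mathcal{D}$-module support argument with a combinatorial weight computation), but it leans on the explicit multiplicity-free character of $S_{\det}$; the paper's version works with any nonzero vector and never names a specific element, at the cost of invoking the socle structure of $Q_p$. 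Both are complete.
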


\begin{proof}
Suppose that $\delta^r\in \mathfrak{W}(F_{l}(Q_p))$, and let $m$ be a nonzero element of the corresponding isotypic component of $Q_p$. We let $S_{(1^{r+1})}\cong \bw^{r+1}\C^n\otimes \bw^{r+1}\C^n$ denote the subspace of the polynomial ring $S$ spanned by the $(r+1)\times (r+1)$ minors of the generic matrix of variables. Since $F_l(Q_p)$ is an $S$-submodule of $Q_p$, it suffices to show that the subspace $N^r=S_{(1^{r+1})}\cdot m$ of $Q_p$ is nonzero. Suppose for contradiction that $N^r=0$. Since $S_{(1^{r+1})}$ is the space of defining equations of $\Z_{r}$, it follows that the $S$-submodule of $Q_p$ generated by $m$ has support contained in $\Z_r$. Thus, $H^0_{\Z_{r}}(Q_p)\neq 0$, which implies that $Q_p$ has a $\D$-submodule with support contained in $\Z_{r}$. Since $r\leq p-1$, this is impossible, as $D_p$ has support $\Z_p$ and is the socle of $Q_p$ \cite[Lemma 6.3]{lHorincz2018iterated}. 
\end{proof}


\section{The weight filtration}\label{SecWeight}
Let $\X=\C^{m\times n}$ be the space of $m\times n$ matrices, with $m\geq n$.  Theorem \ref{Othm} is a consequence of our main result, which addresses the mixed Hodge module structure on any local cohomology module of the form $H^j_{\Z_q}(\IC^H_{\Z_{p}})$. We write  $d_p=\dim \Z_p=p(m+n-p)$ and $c_p=\operatorname{codim}(\Z_p,\X)=(m-p)(n-p) $. 

The goal of this section is to prove the following.

\begin{theorem}\label{main}
Let $0\leq q< p \leq n \leq m$ and $j\geq 0$. 
\begin{enumerate}
\item If $m=n$, then	$H^j_{\Z_q}(\IC^H_{\Z_{p}})$ is a direct sum of mixed Hodge modules of the form $\mc{Q}^H_r((c_p+n-q-j)/2)$ for $0\leq r\leq q$.
\item If $m\neq n$, then $H^j_{\Z_q}(\IC^H_{\Z_{p}})$ is a direct sum of pure Hodge modules of the form $\IC^H_{\Z_r}((d_r-d_p+r-q-j)/2)$ for $0\leq r\leq q$.
\end{enumerate}

In particular, in either case, if $\mc{M}$ is a simple composition factor of $H^j_{\Z_q}(\IC^H_{\Z_{p}})$ with support equal to $\Z_r$, then $\mc{M}$ is isomorphic to $\IC^H_{\Z_r}((d_r-d_p+r-q-j)/2)$.
\end{theorem}

The multiplicities of the summands in the statement of Theorem \ref{main} are determined by the formulas in \cite[Theorem 3.1, Theorem 6.1]{lHorincz2018iterated}, the former of which is recalled below.

As $\Z_n\setminus \Z_{n-1}$ is dense, we have $\mc{O}_{\X}^H=\IC^H_{\Z_n}$, so Theorem \ref{Othm} is a consequence of the case $p=n$ of Theorem \ref{main}. Our strategy for proving Theorem \ref{main} is explained in the Introduction. We discuss how one may deduce the Hodge filtration on these local cohomology modules from Theorem \ref{main} and \cite{perlmanraicu} in Section \ref{SecHodgeFilt}.

We recall the formula for the simple composition factors in this general setting, which we express as a generating function with coefficients in the Grothendieck group of $\GL$-equivariant holonomic $\D$-modules. Given integers $0\leq q< p\leq n \leq m$ we have the identity \cite[Theorem 3.1]{lHorincz2018iterated}: 
\begin{equation}\label{locCohForm}
\sum_{j\geq 0} \big[H^j_{\Z_q}\left(D_p\right)\big]\cdot t^j=\sum_{r=0}^q \left[D_r\right]\cdot t^{(p-q)^2+(p-r)\cdot (m-n)}\cdot \binom{n-r}{p-r}_{t^2}\cdot \binom{p-r-1}{q-r}_{t^2},
\end{equation}
where $\binom{a}{b}_{t}$ is a \defi{Gaussian binomial coefficient}, defined as follows. For $a\geq b$ we write:
$$
\binom{a}{b}_t=\frac{(1-t^a)\cdot (1-t^{a-1})\cdots (1-t^{a-b+1}) }{(1-t^b)\cdot (1-t^{b-1})\cdots (1-t)},
$$
with the convention that $\binom{a}{b}_t=0$ if $a<b$, and $\binom{a}{0}_t=\binom{a}{a}_t=1$. Specializing to $p=n$ recovers  the formula \cite[Main Theorem]{raicu2016local} for local cohomology of the polynomial ring with support in a determinantal variety.

To prove Theorem \ref{main}, we require the following piece of information from (\ref{locCohForm}).

\begin{lemma}\label{parityLemma}
Let $0\leq q<p\leq n\leq m$ and $j\geq 0$. The module $D_0$ appears as a simple composition factor of $H^j_{\Z_q}(D_p)$ only if $j\equiv (p-q)^2+p(m-n)$ \tn{($\mod\;2$)}.	
\end{lemma}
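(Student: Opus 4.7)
The plan is to extract the lemma directly from the known formula (\ref{locCohForm}) by reading off parities. The multiplicity of $D_0$ in $H^j_{\Z_q}(D_p)$ is, by (\ref{locCohForm}), the coefficient of $t^j$ in the $r=0$ summand, namely
\[
t^{(p-q)^2+p(m-n)} \cdot \binom{n}{p}_{t^2} \cdot \binom{p-1}{q}_{t^2}.
\]
So the question reduces to identifying which exponents of $t$ can appear in this product.

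The key observation is that each Gaussian binomial coefficient $\binom{a}{b}_{t^2}$ is a polynomial in the variable $t^2$: indeed, by definition $\binom{a}{b}_{t^2}$ is obtained from $\binom{a}{b}_{t}$ by substituting $t^2$ for $t$, and $\binom{a}{b}_t$ is known to be a polynomial in $t$. Consequently the product $\binom{n}{p}_{t^2}\cdot \binom{p-1}{q}_{t^2}$ involves only even powers of $t$, so every exponent of $t$ appearing with nonzero coefficient in the displayed expression above has the form $(p-q)^2+p(m-n)+2k$ for some integer $k\geq 0$. This forces the parity condition $j\equiv (p-q)^2+p(m-n)\pmod{2}$ whenever $D_0$ occurs in $H^j_{\Z_q}(D_p)$, which is exactly the statement of the lemma.

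The main (and essentially only) nontrivial input is the formula (\ref{locCohForm}) itself, which is quoted from \cite{lHorincz2018iterated}; once this is in hand, the argument is a one-line parity extraction with no obstacle. I would not attempt to replace this input by a more conceptual argument here, since the later sections of the paper use (\ref{locCohForm}) in full strength anyway.
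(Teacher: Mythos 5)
Your argument is exactly the paper's: both isolate the $r=0$ summand of (\ref{locCohForm}), note that the Gaussian binomial coefficients are polynomials in $t^2$, and conclude that $D_0$ can appear only in degrees congruent to $(p-q)^2+p(m-n)$ modulo $2$. The proof is correct and essentially identical to the one in the paper.
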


\begin{proof}
By (\ref{locCohForm}) the smallest degree in which $D_0$ appears is $(p-q)^2+p(m-n)$. The result then follows immediately from the fact that the Gaussian binomial coefficients in (\ref{locCohForm}) are supported in even degrees.	
\end{proof}

\subsection{The inductive structure of determinantal varieties}\label{SecInd} We now establish the inductive setup. Our treatment follows \cite[Section 2H]{lHorincz2018iterated} (see also \cite[Proposition 2.4]{bruns}).  

We choose coordinates $(x_{i,j})_{1\leq i\leq m, 1\leq j\leq n}$ on the space of matrices $\X$, and we let $\X_1$ denote the open subset of $\X$ defined by non-vanishing of the top left coordinate $x_{1,1}$. By performing row and column operations to eliminate entries in the first row and column of the generic matrix, we obtain an isomorphism:
$$
\X_1\cong \X'\times \C^{m-1}\times \C^{n-1}\times \C^{\ast},
$$
where $\X'$ is isomorphic to the space of matrices $\C^{(m-1)\times (n-1)}$, with coordinates
\begin{equation}\label{newcoord}
x_{i,j}'=x_{i,j}-\frac{x_{i,1}\cdot x_{1,j}}{x_{1,1}}.
\end{equation}
The copy of $\C^{\ast}$ above corresponds to the coordinate $x_{1,1}$, and the spaces $\C^{m-1}$ and $\C^{n-1}$ correspond to the remaining entries of the first column and row of $\X$, respectively.

For $p=0,\cdots ,n-1$, we write $\Z'_p\subseteq \X'$ for the determinantal variety of matrices of rank $\leq p$, with dimension $d'_p=p(m+n-p-2)$. We write $D_p'=\mc{L}(\Z_p',\X')$ for the intersection homology module associated to $\Z_p'$, and we write $Q_p'$ for the modules (\ref{defQ}) on $\X'$ in the case $m=n$.

Let $\phi:\X_1\to \X$ be the open immersion of $\X_1$ into $\X$, and let $\pi:\X_1 \to \X'$ be the projection map, noting that these are both smooth morphisms. For ease of notation we set $\X_1\cong \X'\times \Y$, so that $\pi$ has relative dimension $d_{\Y}=m+n-1$. We note that $d_p=d_p'+d_{\Y}$. 

Since $\Z_p$ is defined by the vanishing of the $(p+1)$-minors of the matrix of variables $(x_{i,j})$, one can show using (\ref{newcoord}) that $\phi^{-1}(\Z_p)=\pi^{-1}(\Z_{p-1}')$ for $1\leq p\leq n$. Further, if we write $\phi^{\ast}$, $\pi^{\ast}$ for the (non-shifted) inverse image functors of $\D$-modules, then we have the following isomorphisms: 
 \begin{equation}\label{pullD}
\phi^{\ast}(D_{p})\cong \pi^{\ast}(D_{p-1}'),\;\;\;\text{for\;\;$p=1,\cdots,n$}.
 \end{equation}
and we have
\begin{equation}\label{pullQ}
\phi^{\ast}(Q_{p})\cong \pi^{\ast}(Q_{p-1}'),\;\;\;\text{for\;\;$p=1,\cdots,n$}. 	
 \end{equation}

In addition, for all $1\leq q\leq p$, we have the following:
\begin{equation}\label{npullH}
\phi^{\ast}\big(H^j_{\Z_{q}}\big(D_p \big)\big)\cong \pi^{\ast}\big(H^j_{\Z'_{q-1}}\big(D'_{p-1} \big) \big),\;\;\;\text{for \;\;$j\geq 0$}.    
\end{equation}

\subsection{Hodge modules and the inductive setting}\label{SecIndHodge} In this subsection we determine how the mixed Hodge structure of local cohomology modules on $\X$ is related to that on $\X'$. The main result here is the following.

\begin{prop}\label{indcor}
Let $1\leq q<p \leq n\leq m$ and let $j\geq 0$. Let $U^0=\X\setminus \{0\}$ with open immersion $f:U^0\to \X$, and suppose that Theorem \ref{main} holds for the parameters $(q-1,p-1,n-1,m-1,j)$.
\begin{enumerate}
\item If $m=n$, then	$\mc{H}^0(f_{\ast}f^!H^j_{\Z_q}(\IC^H_{\Z_{p}}))$ is a direct sum of mixed Hodge modules of the form $\mc{Q}^H_r((c_p+n-q-j)/2)$ for $1\leq r\leq q$.
\item If $m\neq n$, then $\mc{H}^0(f_{\ast}f^!H^j_{\Z_q}(\IC^H_{\Z_{p}}))$ is a direct sum of pure Hodge modules of the form $\IC^H_{\Z_r}((d_r-d_p+r-q-j)/2)$ for $1\leq r\leq q$.
\end{enumerate}
\end{prop}

To prove Proposition \ref{indcor} above, we determine versions of the isomorphisms (\ref{pullD}), (\ref{pullQ}), and (\ref{npullH}) in the category of mixed Hodge modules, and then we follow the argument in \cite{lHorincz2018iterated} for the $\D$-module version of Proposition \ref{indcor}. The analogue of (\ref{npullH}) that we formulate must respect our choice of mixed Hodge structure on local cohomology (see Section \ref{LCP}). For this reason, we work in the derived categories of mixed Hodge modules on $\X$, $\X_1$, and $\X'$, and thus use the (cohomologically shifted) inverse image functors $\phi^{!}$ and $\pi^{!}$, which lift the $\D$-module functors $\phi^{\dagger}=\phi^{\ast}$ and $\pi^{\dagger}=\pi^{\ast}[d_{\Y}]$ respectively (see Section \ref{functors}).

Changing the functors in (\ref{pullD}), we immediately obtain the following isomorphisms of $\D$-modules:
\begin{equation}\label{pullopen}
\phi^{\dagger}(D_p)\cong \pi^{\dagger}\left(D_{p-1}'[-d_{\Y}]\right),\;\;\;\text{for\;\;$p=1,\cdots,n$}.	
\end{equation}

We start by translating these isomorphisms to the level of Hodge modules. 

\begin{lemma}\label{pullH}
Let $1\leq p\leq n$. We have the following isomorphims in $\tn{D}^b \tn{MHM}(\X_1)$ for all $k\in \mathbb{Z}$: 
\begin{equation}\label{pullH2}
\phi^{!}\left(\IC^H_{\Z_{p}}(k)\right)\cong \pi^{!}\left(\IC^H_{\Z_{p-1}'}(k)\left[-d_{\Y}\right]\right).
\end{equation}
\end{lemma}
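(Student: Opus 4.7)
The plan is to show that both sides of (\ref{pullH2}) are pure Hodge modules on $\X_1$ concentrated in cohomological degree zero, overlying the same intersection cohomology $\D$-module on $\X_1$, and then match their Tate twists via a weight computation.

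First, I would verify that each side actually lies in $\tn{MHM}(\X_1)$ rather than just $\tn{D}^b\tn{MHM}(\X_1)$. For the LHS this is immediate: $\phi$ is an open immersion (relative dimension $0$), so $\phi^!=\phi^*$ is $t$-exact on MHM and preserves purity. For the RHS, $\pi^!$ lifts $\pi^\dagger$, which by Section \ref{functors} equals $\pi^*[d_\Y]$ for smooth $\pi$ of relative dimension $d_\Y$; the internal cohomological shift $[-d_\Y]$ in (\ref{pullH2}) precisely compensates this degree shift, so $\pi^!(-\,[-d_\Y])$ is $t$-exact, and smooth pullback preserves purity.

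Second, I would identify the underlying $\D$-modules. By Section \ref{SecInd} one has $\phi^{-1}(\Z_p)=\pi^{-1}(\Z'_{p-1})$, and (\ref{pullopen}) gives $\phi^{\dagger}(D_p)\cong\pi^{\dagger}(D'_{p-1}[-d_{\Y}])$, which unwinds to $\phi^{*}(D_p)\cong\pi^{*}(D'_{p-1})\cong\mc{L}(\phi^{-1}(\Z_p),\X_1)$. Thus both sides of (\ref{pullH2}) are pure Hodge modules overlying the same simple intersection cohomology $\D$-module. By the uniqueness recalled in Section \ref{functors}, they must therefore agree up to a Tate twist, and it remains to check that this twist is trivial.

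Third, I would compare weights. Combining: $\phi^!$ for the open immersion preserves weight, a shift $[n]$ adds $n$ to weight, a Tate twist $(k)$ subtracts $2k$, and $\pi^!$ for the smooth morphism $\pi$ interacts with weights in the standard way dictated by the conventions in Section \ref{functors}. Using the dimension identity $d_p=d'_{p-1}+d_{\Y}$ (which one checks directly from $d_p=p(m+n-p)$ and $d'_{p-1}=(p-1)(m+n-p-1)$), the weights on the two sides come out equal---both compute to $d_p-2k$---forcing the Tate twists to match. The main obstacle is keeping the conventions for how $\pi^!$ and cohomological shifts affect weights consistent with the paper's definition of $f^{\dagger}$; once that bookkeeping is nailed down, the verification is a direct calculation and yields the desired isomorphism.
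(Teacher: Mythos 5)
Your approach is essentially the one in the paper: reduce to purity-plus-weight-matching, identify the underlying $\D$-module via (\ref{pullopen}), invoke uniqueness of the pure Hodge module overlying a given IC $\D$-module up to Tate twist, and use the dimension identity $d_p=d'_{p-1}+d_{\Y}$ to pin the twist down. The one place you explicitly flag as an ``obstacle'' and then leave unresolved is exactly where the paper does real work: the ``standard'' behaviour of $\pi^!$ on weights for a smooth morphism is not contained in the conventions of Section \ref{functors} (those merely \emph{define} weight in the derived category) and needs an actual input. The paper supplies this by citing Schnell, Theorem 9.3: for a smooth morphism $f:X\to Y$ and a pure Hodge module $\mc{N}$ of weight $v$ on $Y$, the module $\mc{H}^{d_Y-d_X}(f^{!}\mc{N})$ is pure of weight $v+d_X-d_Y$. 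Once you import that statement for $\pi$, your weight bookkeeping closes and the proof matches the paper's; without it, step three is an assertion rather than an argument. (As a small simplification, note the paper first reduces to $k=0$ since both sides transform the same way under Tate twist, which spares one from carrying $k$ through the computation.)
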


\begin{proof}
It suffices to verify the case $k=0$. Both sides of (\ref{pullH2}) correspond to a variation of Hodge structure on the trivial local system on $(\Z_{p-1}'\setminus \Z_{p-2}')\times \Y$. Thus, we need to show that their weights match. By \cite[Theorem 8.3]{schnell}, given a smooth morphism $f:X\to Y$, and a pure Hodge module $\mc{N}$ of weight $v$ on $Y$, the inverse image $\mc{H}^{d_Y-d_X}(f^{!}(\mc{N}))$ is pure of weight $v+d_X-d_Y$ (see also \cite[2.26]{saito90}). Thus, $\phi^{!}(\IC^H_{\Z_p})$ is pure of weight $d_p$, and $\pi^{!}(\IC^H_{\Z'_{p-1}}[-d_{\Y}])$ is pure of weight $d_{p-1}'+d_{\Y}$. Since $d_p=d_{p-1}'+d_{\Y}$, the result follows. \end{proof}

\begin{prop}\label{reduction}
Let $1\leq q< p \leq n$. We have an isomorphism in $\tn{D}^b\tn{MHM}(\X_1)$:
$$
\phi^{!}\left(\mathbb{R}\Gamma_{\Z_{q}}\left(\IC^H_{\Z_{p}}\right)\right)\cong\pi^{!}\left(\mathbb{R}\Gamma_{\Z_{q-1}'}\left(\IC_{\Z_{p-1}'}^H\left[-d_{\Y}\right]\right)\right).
$$
\end{prop}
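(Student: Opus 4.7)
The plan is to show that both sides of the desired isomorphism compute local cohomology on $\X_1$ with support in the common closed subset $Z_1 := \phi^{-1}(\Z_q) = \pi^{-1}(\Z_{q-1}')$, and then apply Lemma \ref{pullH} to identify the underlying complexes being pulled back.

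First, I would establish a general base change statement: for any $\mc{M}\in \tn{D}^b\tn{MHM}(\X)$, there is a natural isomorphism
\begin{equation*}
\phi^{!}\left(\mathbb{R}\Gamma_{\Z_q}(\mc{M})\right)\cong \mathbb{R}\Gamma_{\phi^{-1}(\Z_q)}\left(\phi^{!}(\mc{M})\right).
\end{equation*}
To prove this, apply $\phi^{!}$ to the local cohomology triangle (\ref{triangle}) for $\mc{M}$ with respect to the open immersion $j:\X\setminus \Z_q\hookrightarrow \X$, and use base change along the Cartesian square
\begin{equation*}
\begin{CD}
\X_1\setminus \phi^{-1}(\Z_q) @>j_1>> \X_1\\
@VV\phi'V @VV\phi V\\
\X\setminus \Z_q @>j>> \X
\end{CD}
\end{equation*}
to rewrite $\phi^{!}j_{\ast}j^{!}(\mc{M})\cong j_{1,\ast}j_1^{!}\phi^{!}(\mc{M})$. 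The resulting triangle is precisely the local cohomology triangle for $\phi^{!}(\mc{M})$ along $\phi^{-1}(\Z_q)$. The same argument, now applied to the Cartesian square arising from the open immersion $\X'\setminus \Z_{q-1}'\hookrightarrow \X'$ base-changed via the smooth map $\pi$, yields the analogous statement
\begin{equation*}
\pi^{!}\left(\mathbb{R}\Gamma_{\Z_{q-1}'}(\mc{N})\right)\cong \mathbb{R}\Gamma_{\pi^{-1}(\Z_{q-1}')}\left(\pi^{!}(\mc{N})\right)
\end{equation*}
for $\mc{N}\in \tn{D}^b\tn{MHM}(\X')$.

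Now specialize to $\mc{M}=\IC^H_{\Z_p}$ and $\mc{N}=\IC^H_{\Z_{p-1}'}[-d_{\Y}]$, and use the identification $\phi^{-1}(\Z_q)=\pi^{-1}(\Z_{q-1}')=Z_1$ recalled in Section \ref{SecInd}. Both sides of the proposition become $\mathbb{R}\Gamma_{Z_1}$ applied to the pullbacks $\phi^{!}(\IC^H_{\Z_p})$ and $\pi^{!}(\IC^H_{\Z_{p-1}'}[-d_{\Y}])$, respectively. By Lemma \ref{pullH} these two complexes on $\X_1$ are canonically isomorphic in $\tn{MHM}(\X_1)$, so applying $\mathbb{R}\Gamma_{Z_1}$ to the isomorphism yields the desired identification in $\tn{D}^b\tn{MHM}(\X_1)$.

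The one delicate point is verifying that base change for the $!$-inverse image truly holds along an open immersion in the derived category of mixed Hodge modules, including functoriality compatibilities needed to identify the triangles; this is the step where I would be most careful, but it is a standard consequence of the six-functor formalism for mixed Hodge modules (for open immersions, $!$-pullback is just restriction, and this commutes with $\ast$-pushforward along the complementary closed inclusion and with its right adjoint). Once this is in hand, the rest is a direct assembly.
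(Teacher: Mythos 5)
Your proposal is correct and follows essentially the same route as the paper: apply $\phi^{!}$ and $\pi^{!}$ to the local cohomology triangle (\ref{triangle}), use smooth base change along the open immersions (\cite[Theorem 1.7.3]{hotta2007d}) in the Cartesian square $U_1 = \phi^{-1}(U) = \pi^{-1}(U')$ to commute $j_{\ast}j^{!}$ past the smooth pullbacks, and then invoke Lemma \ref{pullH} to identify $\phi^{!}(\IC^H_{\Z_p})$ with $\pi^{!}(\IC^H_{\Z'_{p-1}}[-d_{\Y}])$. The only cosmetic difference is that you first abstract this into a general claim that $\phi^{!}$ (resp.\ $\pi^{!}$) intertwines $\mathbb{R}\Gamma_{\Z_q}$ with $\mathbb{R}\Gamma_{\phi^{-1}(\Z_q)}$, whereas the paper compares the second and third terms of the two triangles directly and lets the first terms follow.
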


\begin{proof}
Let $U=\X\setminus \Z_{q}$, $U'=\X'\setminus \Z'_{q-1}$, and $U_1=\phi^{-1}(U)=\pi^{-1}(U')$, and consider the following commutative diagram:
\begin{equation}\label{diagram}
\begin{tikzcd}[column sep = huge]
U'\arrow[d, hook, "i"] & U_1 \arrow[l,  "\tilde{\pi}"']\arrow[r, "\tilde{\phi}"]\arrow[d, hook, "j"] & U \arrow[d, hook, "k"]\\
\X' & \X_1 \arrow[l, "\pi"]\arrow[r, hook, "\phi"'] & \X	
\end{tikzcd}
\end{equation}
where the vertical arrows are the open immersions, and $\tilde{\pi}$, $\tilde{\phi}$ are the maps on $U_1$ induced by $\pi$, $\phi$. Using the distinguished triangle (\ref{triangle}) and Lemma \ref{pullH}, it suffices to show that 
\begin{equation}\label{desiredisoH}
\phi^{!}k_{\ast}k^{!}\left(\IC^H_{\Z_{p}}\right)\cong \pi^{!}i_{\ast}i^{!}\left(\IC_{\Z_{p-1}'}^H\left[-d_{\Y}\right]\right).
\end{equation}
Since $U_1=\phi^{-1}(U)=\pi^{-1}(U')$, the base change theorem for Hodge modules \cite[4.4.3]{saito90} implies that $\phi^{!}k_{\ast}=j_{\ast}\tilde{\phi}^{!}$ and  $\pi^{!}i_{\ast}=j_{\ast}\tilde{\pi}^{!}$. Commutativity of (\ref{diagram}) implies $\tilde{\phi}^{!}k^{!}=j^{!}\phi^{!}$ and $\tilde{\pi}^{!}i^{!}=j^{!}\pi^{!}$, so we obtain
$$
\phi^{!}k_{\ast}k^{!}\left(\IC^H_{\Z_{p}}\right)\cong j_{\ast}j^{!}\phi^{!}\left(\IC^H_{\Z_{p}}\right),\;\;\tn{and}\;\; \pi^{!}i_{\ast}i^{!}\left(\IC_{\Z_{p-1}'}^H\left[-d_{\Y}\right]\right)\cong j_{\ast}j^{!}\pi^{!}\left(\IC_{\Z_{p-1}'}^H\left[-d_{\Y}\right]\right).
$$
The desired isomorphism (\ref{desiredisoH}) then follows from (\ref{pullH2}).
\end{proof}

As a corollary, we translate (\ref{npullH}) to the level of mixed Hodge modules.

\begin{corollary}\label{newcor}
Let $1\leq q< p \leq n$ and $j\geq 0$. We have an isomorphism in $\tn{D}^b\tn{MHM}(\X_1)$:
\begin{equation}\label{ffff}
\phi^{!}\left(H^j_{\Z_q}\left(\IC^H_{\Z_p} \right) \right)\cong \pi^{!}\left(H^j_{\Z_{q-1}'}\left( \IC^H_{\Z_{p-1}'} \right)\left[-d_{\Y}\right] \right).
\end{equation}
\end{corollary}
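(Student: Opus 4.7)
The plan is to obtain the corollary as an immediate consequence of Proposition \ref{reduction} by applying the cohomology functor $\mc{H}^j$ to the isomorphism in $\tn{D}^b\tn{MHM}(\X_1)$ that it provides. The main task is to commute $\mc{H}^j$ past each of the pullback functors $\phi^!$ and $\pi^!$ and identify the resulting expression with each side of (\ref{ffff}).

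First I would handle the left-hand side of the isomorphism in Proposition \ref{reduction}. Since $\phi$ is an open immersion, $\phi^!$ coincides with $\phi^*$ and is t-exact for the standard t-structure on mixed Hodge modules. Therefore $\mc{H}^j$ commutes with $\phi^!$, yielding $\phi^!\bigl(H^j_{\Z_q}(\IC^H_{\Z_p})\bigr)$, which is exactly the left-hand side of (\ref{ffff}).

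Next I would handle the right-hand side. Here $\pi$ is smooth of relative dimension $d_{\Y}$, so at the level of $\D$-modules $\pi^\dagger = \pi^*[d_{\Y}]$ (see Section \ref{functors}); lifting to mixed Hodge modules, the functor $\pi^![-d_{\Y}]$ is t-exact. The crucial bookkeeping is that the internal shift $[-d_{\Y}]$ appearing inside $\pi^!$ in Proposition \ref{reduction} precisely cancels the degree shift introduced by $\pi^!$ itself. Commuting shifts past $\mathbb{R}\Gamma_{\Z'_{q-1}}$ and using t-exactness of $\pi^![-d_{\Y}]$, the $j$-th cohomology simplifies to $\pi^!\bigl(H^j_{\Z'_{q-1}}(\IC^H_{\Z'_{p-1}})[-d_{\Y}]\bigr)$, matching the right-hand side of (\ref{ffff}).

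No serious obstacle is anticipated: the proof reduces to routine shift-tracking in the derived category once the t-exactness properties of $\phi^!$ and $\pi^![-d_{\Y}]$ are in hand. The only subtle point is the placement of $[-d_{\Y}]$, which is precisely why it was inserted in the statement of Proposition \ref{reduction}; without it, the right-hand side of (\ref{ffff}) would be a complex rather than an honest mixed Hodge module in degree zero.
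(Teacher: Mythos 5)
Your proposal is correct and takes essentially the same approach as the paper: apply $\mc{H}^j$ to the isomorphism of Proposition \ref{reduction}, using that $\phi^!$ is exact because $\phi$ is smooth of relative dimension zero (an open immersion), and that $\pi^![-d_{\Y}]$ is exact because $\pi$ is smooth of relative dimension $d_{\Y}$.
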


\begin{proof}
Since $\phi$ is smooth of relative dimension zero,  we have 
$$
\mc{H}^j\left( \phi^{!}\left(\mathbb{R}\Gamma_{\Z_{q}}\left(\IC^H_{\Z_{p}}\right)\right)\right)=\phi^{!}\left(H^j_{\Z_q}\left(\IC^H_{\Z_p} \right) \right).
$$	
On the other hand, since $\pi$ is smooth of relative dimension $d_{\Y}$, we have $$
\mc{H}^j\left(\pi^{!}\left(\mathbb{R}\Gamma_{\Z_{q-1}'}\left(\IC_{\Z_{p-1}'}^H\left[-d_{\Y}\right]\right)\right) \right) = \pi^{!}\left(H^j_{\Z_{q-1}'}\left(\IC^H_{\Z_{p-1}'} \right)\left[-d_{\Y}\right] \right).
$$
Thus, (\ref{ffff}) is the consequence of taking cohomology of the isomorphism stated in Proposition \ref{reduction}.
\end{proof}

For the remainder of the section, we denote by $\mc{Q}^{H'}_p$ the mixed Hodge module overlying $Q_p'$ defined in (\ref{defQph}).

\begin{corollary}\label{pushpullQ}
Let $1\leq r\leq  n=m$. We have the following isomorphims in $\tn{D}^b \tn{MHM}(\X_1)$ for all $k\in \mathbb{Z}$: 
$$
\phi^!\left(\mc{Q}_r^H(k)\right)\cong \pi^!\left(\mc{Q}_{r-1}^{H'}(k)\left[-d_T\right]\right).$$

\end{corollary}

\begin{proof}
By smoothness of $\phi$ and $\pi$, it suffices to prove the result after applying $\mc{H}^0$. We proceed by descending induction on $r$. By (\ref{desiredisoH}) for $p=n$ and $q=n-1$, we deduce the base case $r=n$, as $k_{\ast}k^{!}(\mc{O}^H_{\X})\cong \mc{Q}^H_n$ and $i_{\ast}i^{!}(\mc{O}^H_{\X'})=\mc{Q}^{H'}_{n-1}$. For the inductive step, we consider the diagram
$$
\begin{tikzcd}[column sep = small]
0 \arrow[r] & \mc{H}^0\left(\phi^{!}\left(\IC^H_{\Z_{r}}\left(-\binom{n-r+1}{2}\right)\right)\right)\arrow[d, "\sim"] \arrow[r] & \mc{H}^0\left(\phi^!\left(\mc{Q}_r^H\right) \right)\arrow[r]\arrow[d, "\sim"] & \mc{H}^0\left(\phi^!\left(\mc{Q}_{r-1}^H\right) \right)\arrow[r] & 0\\
0 \arrow[r] & \mc{H}^0\left(\pi^{!}\left(\IC^H_{\Z_{r-1}'}\left(-\binom{n-r+1}{2}\right)\left[-d_T\right]\right)\right)\arrow[r] & \mc{H}^0\left(\pi^!\left(\mc{Q}_{r-1}^{H'}\left[-d_T\right]\right)\right)\arrow[r] & \mc{H}^0\left(\pi^!\left(\mc{Q}_{r-2}^{H'}\left[-d_T\right]\right)\right)\arrow[r]	&  0
\end{tikzcd}
$$
The rows in the diagram arise from the definition (\ref{defQph}) of $\mc{Q}^H_r$, and smoothness of $\phi$ and $\pi$. The vertical isomorphisms follow from Lemma \ref{pullH} and the inductive hypothesis, respectively. 

Since morphisms of mixed Hodge modules are strict with respect to the Hodge and weight filtrations \cite[Section 8.3.3(m4)]{hotta2007d}, the rows of this diagram remain exact after applying $F_k$ or $W_k$ for some $k\in \mathbb{Z}$. As the left square is a commutative diagram on the underlying doubly-filtered $\D$-modules, there is an induced morphism from the doubly-filtered $\D$-module underlying $\mc{H}^0\left(\phi^!\left(\mc{Q}_{r-1}^H\right) \right)$ to that of $\mc{H}^0\left(\pi^!\left(\mc{Q}_{r-2}^{H'}\left[-d_T\right]\right)\right)$, and this map underlies a morphism of mixed Hodge modules (see \cite[Section 8.3.3(m1)]{hotta2007d}). By the Snake Lemma, we obtain the desired isomorphism.
\end{proof}

Now that we have versions of the isomorphisms (\ref{pullD}), (\ref{pullQ}), and (\ref{npullH}) in the category of mixed Hodge modules, we proceed with the proof of Proposition \ref{indcor}, following \cite[Section 6.2]{lHorincz2018iterated}. As in the statement of Proposition \ref{indcor}, we let $U^0=\X\setminus \{0\}$ with open immersion $f:U^0\to \X$. We write $f_1$ for the open immersion of $\X_1$ into $U^0$, so  $\phi=f\circ f_1$. The following is a Hodge-theoretic version of \cite[Lemma 6.7]{lHorincz2018iterated}. 

\begin{lemma}\label{newRest}
If $\mc{M},\mc{N}\in \operatorname{MHM}(U^0)$ lie over modules in 	$\tn{mod}_{\GL}(\D_{U^0})$, and $f_1^!(\mc{M})\cong f_1^{!}(\mc{N})$, then $\mc{M}\cong \mc{N}$.
\end{lemma}

\begin{proof}
This follows from the argument in the proof of \cite[Lemma 6.7]{lHorincz2018iterated}, replacing $\D$-module functors with the corresponding Hodge module functors (see also the last paragraph in the proof of Corollary \ref{pushpullQ}).
\end{proof}

\begin{lemma}\label{pushpulllemm}
For all $1\leq r\leq n$ we have
\begin{enumerate}
\item if $m=n$, then $\mc{H}^0(f_{\ast}f^! \mc{Q}_r^H)\cong \mc{Q}^H_r$,
\item if $m\neq n$, then $\mc{H}^0(f_{\ast}f^! IC^H_{\Z_r})\cong IC^H_{\Z_r}$.
\end{enumerate}
	
\end{lemma}

\begin{proof}
Let $\mc{M}$ be $\mc{Q}^H_r$ or 	$IC^H_{\Z_r}$. Since $r\geq 1$ we have $H^0_{\{0\}}(\mc{M})=0$. Thus, by (\ref{triangle}) for $j=f$, it suffices to show that $H^1_{\{0\}}(\mc{M})=0$. When $\mc{M}=\mc{Q}^H_r$, this is verified in the second paragraph of the proof of \cite[Proposition 6.8]{lHorincz2018iterated}, and when $m\neq n$ and $IC^H_{\Z_r}$, this follows from \cite[Theorem 3.1]{lHorincz2018iterated}.
\end{proof}

Finally, we prove Proposition \ref{indcor}. We write $c'_r=\operatorname{codim}(\Z_r',\X')=((m-1)-r)((n-1)-r)$.

\begin{proof}[Proof of Proposition \ref{indcor}]
Let $1\leq q<p\leq n\leq m$ and let $j\geq 0$. The hypothesis is:  $H^j_{\Z'_{q-1}}(IC^H_{\Z'_{p-1}})$ is a direct sum of Hodge modules of the form $\mc{Q}^{H'}_{r-1}((c_{p-1}'+n-q-j)/2)$ if $m=n$, and it is a direct sum of Hodge modules of the form $IC^H_{\Z_{r-1}'}((d'_{r-1}-d'_{p-1}+r-q-j)/2)$ if $m\neq n$.  

Assume first that $m=n$. We want to show that $\mc{H}^0(f_{\ast}f^!H^j_{\Z_q}(\IC^H_{\Z_{p}}))$ is a direct sum of Hodge modules of the form $\mc{Q}^H_r((c_p+n-q-j)/2)$ for $1\leq r\leq q$. By Lemma \ref{pushpulllemm}(1) it suffices to show that $f^!H^j_{\Z_q}(\IC^H_{\Z_{p}})$ is a direct sum of Hodge modules of the form $f^{!}\mc{Q}^H_r((c_p+n-q-j)/2)$.  The argument is identical to the second-to-last paragraph of the proof of \cite[Proposition 6.8]{lHorincz2018iterated}: by Lemma \ref{newRest} it suffices to prove the result after restricting to $\X_1$. By Corollary \ref{newcor} and our hypothesis there is a  sequence of isomorphisms
$$
f_1^{!}f^!H^j_{\Z_q}(\IC^H_{\Z_{p}})=\phi^{!}H^j_{\Z_q}(\IC^H_{\Z_{p}})\cong \pi^{!}\left(H^j_{\Z_{q-1}'}( \IC^H_{\Z_{p-1}'})\left[-d_{\Y}\right] \right)\cong\bigoplus_{1\leq r\leq q} \pi^!\left(\mc{Q}_{r-1}^{H'}((c_{p-1}'+n-q-j)/2)\left[-d_T\right]\right).
$$
Since $c'_{p-1}=((n-1)-(p-1))^2=(n-p)^2=c_p$, we obtain the desired result by Corollary \ref{pushpullQ}.

The proof for the case $m\neq n$ is identical, and uses Lemma \ref{pullH} and Lemma \ref{pushpulllemm}(2).
\end{proof}

\subsection{Proof of Theorem \ref{main}}\label{SecProof}  In this section we complete the proof of Theorem \ref{main}. Let $\X=\C^{m\times n}$ be the space of $m\times n$ generic matrices, with $m\geq n$. We write $\Z_p\subseteq \X$ for the determinantal variety of matrices of rank $\leq p$, with $d_p=\dim \Z_p=p(m+n-p)$. 

We proceed by induction on $n\geq 1$. The following implies the case $n=1$, and will serve as base case when we later induce on $q\geq 0$.

\begin{lemma}\label{q0}
For $j\geq 0$ we have that $H^j_{\Z_0}(\IC^H_{\Z_{p}})$ is a direct sum of copies of $\IC^H_{\Z_0}((-d_p-j)/2)$.
\end{lemma}

\begin{proof}
Let $i:\{0\}\hookrightarrow \X$ be the closed immersion of the origin ($=\Z_0$). By \cite[Proposition 1.7.1(iii)]{hotta2007d} we have that $i_{\ast}i^!=\mathbb{R}\Gamma_{\{0\}}(-)$ on the category $\tn{D}^b\tn{MHM}(\X)$, in a manner compatible with our fixed choice of mixed Hodge module structure on local cohomology. Since $i$ is the inclusion of the origin, Kashiwara's equivalence and \cite[Section 8.3.3(m13)]{hotta2007d} imply that the functor $i_{\ast}$ preserves pure complexes and their weight. Thus, it suffices to show that for all $j\in \mathbb{Z}$ we have that $\mc{H}^j(i^!(\IC^H_{\Z_p}))$ is a direct sum of copies of $\mathbb{Q}^H((-d_p-j)/2)$, where $\mathbb{Q}^H(k)$ denote the Hodge structures of Tate \cite[(I-3)]{peterssteen}.

The determinantal variety $\Z_p$ admits a small resolution of singularities $\pi:Y_p\to \Z_p$, where $Y_p$ is the total space of a vector bundle on the Grassmannian $\operatorname{Gr}(p,\C^n)$, and $\pi^{-1}(\{0\})\cong \operatorname{Gr}(p,\C^n)$ \cite[Section 3.3]{MR617466}. By base change for mixed Hodge modules \cite[4.4.3]{saito90} we have $i^!(\pi_{\ast}\mc{O}^H_{Y_p})\cong \tilde{\pi}_{\ast}(\tilde{i}^!\mc{O}^H_{Y_p})$,
where $\tilde{i}$ is the inclusion of $\pi^{-1}(\{0\})$ into $Y_p$ and $\tilde{\pi}$ is the restriction of $\pi$ to $\pi^{-1}(\{0\})$. Since $\pi$ is small we have $\pi_{\ast}\mc{O}^H_{Y_p}\cong \IC^H_{\Z_p}$ \cite[Section 8.3.3(p7), Proposition 8.2.30]{hotta2007d}. On the other hand, since $\pi^{-1}(\{0\})\cong \operatorname{Gr}(p,\C^n)$ is smooth and $\tilde{i}$ is a closed immersion of codimension $pm$, we have $\tilde{i}^!\mc{O}^H_{Y_p}\cong \mc{O}^H_{\operatorname{Gr}(p,\C^n)}[-pm](-pm)$. Next, for all $j\in \mathbb{Z}$ we have isomorphisms of mixed Hodge structures $\mc{H}^j(\tilde{\pi}_{\ast}\mc{O}^H_{\operatorname{Gr}(p,\C^n)})\cong H^{j+p(n-p)}(\operatorname{Gr}(p,\C^n))$, where $H^{k}(\operatorname{Gr}(p,\C^n))$ denotes the $k$-th de Rham cohomology of $\operatorname{Gr}(p,\C^n)$, endowed with its standard pure Hodge structure of weight $k$ (see Section 8.3.3(p7) and pg. 225 of \cite{hotta2007d}).

Putting it all together, we conclude that for all $j\in \mathbb{Z}$ we have the following isomorphism of mixed Hodge structures
$$
 \mc{H}^j(i^!(\IC^H_{\Z_p}))\cong H^{j-p(m-n+p)}(\operatorname{Gr}(p,\C^n))(-pm).
$$
Since $\operatorname{Gr}(p,\C^n)$ admits an affine paving, it follows that $H^{2k}(\operatorname{Gr}(p,\C^n))$ is a direct sum of copies of $\mathbb{Q}^H(-k)$ for all $0\leq k\leq p(n-p)$ (see \cite[Proposition 5.7.5]{Achar}). Therefore, $ \mc{H}^j(i^!(\IC^H_{\Z_p}))$ is a direct sum of copies of $\mathbb{Q}^H((p(m-n+p)-j)/2-pm)=\mathbb{Q}^H((-d_p-j)/2)$, as claimed. \end{proof}

As the base case is complete, we assume $n\geq 2$, and that Theorem \ref{main} holds for $n'<n$. As in the statement of Proposition \ref{indcor}, we let $U^0=\X\setminus \{0\}$ with open immersion $f:U^0\to \X$. For all $j\geq 0$ we have $H^1_{\{0\}}(H^j_{\Z_q}(IC^H_{\Z_p}))=0 $ \cite[Theorem 1.3, Theorem 1.5]{lHorincz2018iterated}, so by (\ref{triangle}) there is a short exact sequence 
\begin{equation}\label{impseq}
0\lra H^0_{\{0\}}(H^j_{\Z_q}(IC^H_{\Z_p}))\lra H^j_{\Z_q}(IC^H_{\Z_p})\lra \mc{H}^0(f_{\ast}f^{!}(H^j_{\Z_q}(IC^H_{\Z_p}))\lra 0.
\end{equation}
We summarize what we may conclude from the inductive hypothesis on $n$, and the inductive structure of determinantal varieties.

\begin{claim}\label{hypoth}
Assuming the inductive hypothesis on $n$, we have the following for all $j\geq 0$:
\begin{enumerate}
\item The module $\mc{H}^0(f_{\ast}f^{!}(H^j_{\Z_q}(IC^H_{\Z_p}))$ is of the desired form: for $m=n$ it is a direct sum of modules of the form $\mc{Q}^H_r((c_p+n-q-j)/2)$ for $1\leq r\leq q$, and for $m\neq n$ it is a direct sum of modules of the form $\IC^H_{\Z_r}((d_r-d_p+r-q-j)/2)$ for $1\leq r\leq q$. In both cases, all composition factors with support equal to $\Z_r$ are isomorphic to $\IC^H_{\Z_r}((d_r-d_p+r-q-j)/2)$.

\item In order to prove Theorem \ref{main}, it suffices to show that $H^0_{\{0\}}(H^j_{\Z_q}(IC^H_{\Z_p}))$ is a direct sum of copies of $\IC^H_{\Z_0}((-d_p-q-j)/2)$.	
\end{enumerate}

\end{claim}

\begin{proof}
Item (1) is an immediate consequence of Proposition \ref{indcor} and the inductive hypothesis. To prove (2), we note that by (1) the module $\mc{H}^0(f_{\ast}f^{!}(H^j_{\Z_q}(IC^H_{\Z_p}))$ is mixed of weight $\leq d_p+q+j$. If $H^0_{\{0\}}(H^j_{\Z_q}(IC^H_{\Z_p}))$ is a direct sum of copies of $\IC^H_{\Z_0}((-d_p-q-j)/2)$, it is pure of weight $d_p+q+j$. By \cite[Corollary 1.10]{saito89}, the  sequence (\ref{impseq}) must split, yielding the desired result. 
\end{proof}

We now proceed by induction on $q\geq 0$, the base case being Lemma \ref{q0}. Going forward, we fix $1\leq q<p\leq n\leq m$ and $j\geq 0$. The inductive hypothesis on $q$ will not be invoked until the proof of Claim \ref{finalCl}.

Let $\mc{M}_0$ be a composition factor of $H^0_{\{0\}}(H^j_{\Z_q}(IC^H_{\Z_p}))$. By Claim \ref{hypoth}(2) and \cite[Corollary 1.10]{saito89}, to prove Theorem \ref{main}, it suffices to verify the following.

\begin{goal}\label{goal}
The pure Hodge module $\mc{M}_0$ is isomorphic to $\IC^H_{\Z_0}((-d_p-q-j)/2)$.		
\end{goal}

To this end, we examine weights in the following Grothendieck spectral sequence:
\begin{equation}\label{GSS}
E_2^{s,t}=H^s_{\Z_{q-1}}\big(H^t_{\Z_q}\big(\IC^H_{\Z_p}\big)\big)\implies H^{s+t}_{\Z_{q-1}}\big( \IC^H_{\Z_p}\big).
\end{equation}
For $u\geq 2$, the differentials on the $u$-th page are written
\begin{equation}\label{diffsE}
d^{s,t}_u:E_u^{s,t}\longrightarrow E_u^{s+u,t-u+1}.	
\end{equation}

We will use this spectral sequence and the inductive hypotheses to prove the existence of a nonzero morphism from $\mc{M}_0$ to a copy of $\IC^H_{\Z_0}((-d_p-q-j)/2)$.	

Since $\mc{M}_0$ is supported on the origin, we have $H^0_{\Z_{q-1}}(\mc{M}_0)=\mc{M}_0$, and $H^s_{\Z_{q-1}}(\mc{M}_0)=0$ for $s\geq 1$. As $H^0_{\{0\}}(H^j_{\Z_q}(IC^H_{\Z_p}))$ is a submodule of $H^j_{\Z_q}(IC^H_{\Z_p})$, we may identify $\mc{M}_0$ with a composition factor of $E_2^{0,j}$.

By (\ref{GSS}) we have $E^{s,t}_2=0$ for $s<0$ and for $t<0$, so it follows from (\ref{diffsE}) that there are no nonzero differentials to $E_u^{0,j}$ for all $u\geq 2$. Thus, $\mc{M}_0$ is a composition factor of $E^{0,j}_{\infty}$ unless $\mc{M}_0$ supports a nonzero differential on some page of the spectral sequence.

\begin{claim}\label{spectrallem}
For some $v\geq 2$ there is a nonzero morphism from $\mc{M}_0$ to $E_v^{v,j-v+1}$.
\end{claim}

\begin{proof}
Since $\mc{M}_0$ is supported on the origin, Lemma \ref{parityLemma} implies that $j\equiv (p-q)^2+p(m-n)$ ($\mod\;2$). Suppose for contradiction that the claim is false. By the discussion above, it follows that $\mc{M}_0$ is a composition factor of $E^{0,j}_{\infty}$, so $\mc{M}_0$ is a composition factor of $H^j_{\Z_{q-1}}(\IC^H_{\Z_p})$. Again using Lemma \ref{parityLemma}, we have $j\equiv (p-q+1)^2+p(m-n)$ ($\mod\; 2$). However, 
$$
(p-q)^2+p(m-n)-((p-q+1)^2+p(m-n))=-2p+2q-1,
$$
which is odd, yielding a contradiction.
\end{proof}

 Going forward, we write $v$ for the $v$ that satisfies Claim \ref{spectrallem}. It will turn out that the precise value of $v$ is irrelevant for our purposes. 
 Using the inductive hypothesis, we will show that all composition factors of $E_v^{v,j-v+1}$ supported on the origin are isomorphic to $\IC^H_{\Z_0}((-d_p-q-j)/2)$. To do so, it suffices to prove the same about $E_2^{v,j-v+1}$, of which $E_v^{v,j-v+1}$ is a subquotient.

\begin{claim}\label{techlem}
The mixed Hodge module $E_2^{v,j-v+1}$ is isomorphic to $H^v_{\Z_{q-1}}(\mc{N})$, where $\mc{N}$ is a direct sum of copies of $\IC^H_{\Z_q}((d_q-d_p+v-j-1)/2)$.	
\end{claim}

\begin{proof}
For ease of notation we set $\mathcal{M}=H^{j-v+1}_{\Z_q}(\IC^H_{\Z_p})$, and we recall that $E_2^{v,j-v+1}:=H^v_{\Z_{q-1}}(\mathcal{M})$. As discussed prior to the statement of Claim \ref{hypoth}, there is a short exact sequence
$$
0\lra H^0_{\{0\}}(\mathcal{M})\lra \mathcal{M}\lra \mc{H}^0(f_{\ast}f^{!}(\mathcal{M}))\lra 0,
$$
which induces a long exact sequence of local cohomology with support in $\Z_{q-1}$. Since $H^0_{\{0\}}(\mathcal{M})$ is supported on the origin, we have $H^s_{\Z_{q-1}}(H^0_{\{0\}}(\mathcal{M}))=0$ for all $s\geq 1$, so that (since $v\geq 2$) we have
$$
E_2^{v,j-v+1}=H^v_{\Z_{q-1}}(\mathcal{M})\cong H^v_{\Z_{q-1}}(	\mc{H}^0(f_{\ast}f^{!}(\mathcal{M}))).
$$
If $m\neq n$, then Claim \ref{hypoth}(1) implies that $\mc{H}^0(f_{\ast}f^{!}(\mathcal{M}))$ is a direct sum of Hodge modules of the form $\IC^H_{\Z_i}((d_i-d_p+i-q+v-j-1)/2)$ for $1\leq i\leq q$. For $i\leq q-1$, each $\IC^H_{\Z_i}$ is supported in $\Z_{q-1}$, so we have that $H^0_{\Z_{q-1}}(\IC^H_{\Z_i})=\IC^H_{\Z_i}$ and $H^s_{\Z_{q-1}}(\IC^H_{\Z_i})=0$ for $s\geq 1$. Therefore, since $v\geq 2$, it follows that $H^v_{\Z_{q-1}}(	\mc{H}^0(f_{\ast}f^{!}(\mathcal{M})))$ and thus $E_2^{v,j-v+1}$ is a direct sum of copies of $H^v_{\Z_{q-1}}(\IC^H_{\Z_q}((d_q-d_p+v-j-1)/2))$.

If $m=n$, then Claim \ref{hypoth}(1) implies that $\mc{H}^0(f_{\ast}f^{!}(\mathcal{M}))$ is a direct sum of Hodge modules of the form $\mc{Q}^H_i((c_p+n-q+v-j-1)/2)$ for $1\leq i\leq q$. For $i\leq q-1$, each $\mc{Q}^H_i$ is supported in $\Z_{q-1}$, so we have $H^0_{\Z_{q-1}}(\mc{Q}^H_i)=\mc{Q}^H_i$ and $H^s_{\Z_{q-1}}(\mc{Q}^H_i)=0$ for $s\geq 1$. Therefore, since $v\geq 2$, it follows that $H^v_{\Z_{q-1}}(	\mc{H}^0(f_{\ast}f^{!}(\mathcal{M})))$ and thus $E_2^{v,j-v+1}$ is a direct sum of copies of $H^v_{\Z_{q-1}}(\mc{Q}^H_q((c_p+n-q+v-j-1)/2))$. The short exact sequence
$$
0 \lra \IC^H_{\Z_q}((d_q-d_p+v-j-1)/2) \lra \mc{Q}^H_q((c_p+n-q+v-j-1)/2)\lra \mc{Q}^H_{q-1}((c_p+n-q+v-j-1)/2) \lra 0,
$$
induces a long exact sequence of local cohomology with support in $\Z_{q-1}$. Since $v\geq 2$, we conclude that $H^v_{\Z_{q-1}}(\mc{Q}^H_q((c_p+n-q+v-j-1)/2))$ is isomorphic to $H^v_{\Z_{q-1}}(\IC^H_{\Z_q}((d_q-d_p+v-j-1)/2))$. Thus, $E_2^{v,j-v+1}$ is isomorphic to a direct sum of copies of $H^v_{\Z_{q-1}}(\IC^H_{\Z_q}((d_q-d_p+v-j-1)/2))$. 
\end{proof}

\begin{claim}\label{finalCl}
All composition factors of $E_2^{v,j-v+1}$ supported on the origin are isomorphic to the pure Hodge module $\IC^H_{\Z_0}((-d_p-q-j)/2)$.
\end{claim}

\begin{proof}
By Claim \ref{techlem}, we have that $E_2^{v,j-v+1}$ is equal to $H^v_{\Z_{q-1}}(\mc{N})$, where $\mc{N}$ is a direct sum of copies of $\IC^H_{\Z_q}((d_q-d_p+v-j-1)/2)$. Since Tate twists commute with local cohomology functors, it follows from inductive hypothesis on $q$ that a composition factor of $E_2^{v,j-v+1}$ supported on the origin is of the form  
$$
\IC^H_{\Z_0}\left((-d_q-(q-1)-v)/2\right)\left(d_q-d_p+v-j-1)/2\right)=\IC^H_{\Z_0}((-d_p-q-j)/2),
$$ 
as required. 
\end{proof}

\begin{proof}[Conclusion of proof of Theorem \ref{main}] By Claim \ref{spectrallem} there is a nonzero morphism from $\mc{M}_0$ to $E_v^{v,j-v+1}$. Because $\mc{M}_0$ is supported on the origin, its image is supported on the origin, and is a Hodge submodule of $E_v^{v,j-v+1}$. Claim \ref{finalCl} implies that the image of $\mc{M}_0$ is isomorphic to $\IC^H_{\Z_0}((-d_p-q-j)/2)$, and since $\mc{M}_0$ is simple it follows that it is isomorphic to $\IC^H_{\Z_0}((-d_p-q-j)/2)$, as required to complete Goal \ref{goal}, and thus the proof of Theorem \ref{main}.	
\end{proof}


\section{The Hodge filtration}\label{SecHodgeFilt}

In this section we explain how to deduce the Hodge filtration on local cohomology from Theorem \ref{main} and the previous work \cite{perlmanraicu} regarding the Hodge filtration on the intersection cohomology pure Hodge modules. As an application, we determine the generation level and carry out another example. 
\subsection{The Hodge filtrations on the equivariant pure Hodge modules}\label{SecHodgeDp} Let $0\leq p \leq n$ and $k\in \mathbb{Z}$, and let $F_{\bullet}$ denote the Hodge filtration on $\IC^H_{\Z_p}(k)$. The simple $\D$-module $D_p$ underlying $\IC^H_{\Z_p}(k)$ is a representation of the group $\GL=\GL_m(\C)\times \GL_n(\C)$, and the filtered pieces $F_{\bullet}(\IC^H_{\Z_p}(k))$ are $\GL$-subrepresentations of $D_p$. We write $W^p=\mathfrak{W}(D_p)$ and $c_p=\codim \Z_p=(m-p)(n-p)$ (see Section \ref{SecGLRep} for the definition of $\mathfrak{W}(-)$) . For $l\in \mathbb{Z}$,  the dominant weights of the pieces of $F_{\bullet}$ are given by \cite[Theorem 3.1]{perlmanraicu}:
\begin{equation}\label{transF}
\mathfrak{W}\big(F_l\big(\IC^H_{\Z_p}\left(k\right)\big)\big)=\fD^p_{l-c_p-k},\tn{\;\;where\;\;}\fD^p_d:=\big\{\lam \in W^p:\lam_{p+1}+\cdots +\lam_n\geq -d-c_p\big\}. \end{equation}
Thus, the filtration is determined by the sum of the last $n-p$ entries of $\lam \in W^p$. It follows that 
\begin{equation}\label{start}
F_l(\IC^H_{\Z_p}(k))=0\;\; \tn{for $l<c_p+k$, and}\;\; F_{c_p+k}(\IC^H_{\Z_p}(k))\neq 0. 
\end{equation}
One observes the general phenomena that: (1) Tate twisting by $k$ amounts to a shift of the filtration by $k$, and (2) the Hodge filtration on $\IC^H_{Z}(k)$ starts in level $\operatorname{codim}(Z)+k$ (see \cite[Lemma 2.1]{perlmanraicu}). 

\subsection{The Hodge filtration on a local cohomology module}\label{SecHodgeLoc} 

We now describe the equivariant structure of the Hodge filtration $F_{\bullet}$ on local cohomology with determinantal support. 
The following is an immediate consequence of Theorem \ref{main}, (\ref{transF}), and the fact that morphisms in the category of mixed Hodge modules are strict with respect to the Hodge filtration (see \cite[Section 8.3.3(m4)]{hotta2007d}).

\begin{theorem}\label{Hfilthm}
Let $0\leq q<p\leq n\leq m$ and $j\geq 0$. 
Let $a_r$ denote the multiplicity of $D_r$ as a composition factor of $H^j_{\Z_q}(D_p)$. The Hodge filtration is encoded by the following multiset of dominant weights
\begin{equation}\label{Whodge}
\mathfrak{W}\big(F_k\big( H^j_{\Z_q}(\IC^H_{\Z_p})\big) \big)=\bigsqcup_{r=0}^q \big(\fD_{k-(c_r+c_p+r-q-j)/2}^r\big)^{\sqcup a_r},\;\;\tn{for\; $k\in \mathbb{Z}$}.	
\end{equation}

\end{theorem}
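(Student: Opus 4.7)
The plan is to combine the weight computation from Theorem \ref{main} with the equivariant description of the Hodge filtration on each intersection cohomology Hodge module recorded in (\ref{transF}) and abstracted in Lemma \ref{Mlemma}. The strategy is simply to identify the Tate twist of each composition factor and substitute into the multiset formula for the Hodge filtration, then reconcile the indices.

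First, I would apply Theorem \ref{main} to pin down the pure Hodge module structure on each composition factor: each copy of $D_r$ appearing in $H^j_{\Z_q}(\IC^H_{\Z_p})$ underlies $\IC^H_{\Z_r}(k_r)$, where
$$k_r = \frac{d_r - d_p + r - q - j}{2}.$$
In the notation of Lemma \ref{Mlemma}, this means $a^{k_r}_r = a_r$ for $0 \leq r \leq q$, with all other $a^k_r = 0$ (in particular $a^k_r = 0$ for $r > q$, since Theorem \ref{main} is vacuous in that range).

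Next, I would substitute into Lemma \ref{Mlemma} to obtain
$$\mathfrak{W}\big(F_k\big(H^j_{\Z_q}(\IC^H_{\Z_p})\big)\big) = \bigsqcup_{r=0}^q \big(\fD^r_{k - c_r - k_r}\big)^{\sqcup a_r},$$
and simplify the shift using the elementary identity $c_r + d_r = mn$ valid for every $r$ (since $c_r = (m-r)(n-r)$ and $d_r = r(m+n-r)$). Under this identity, $c_r + d_r = c_p + d_p$, so
$$c_r + k_r \;=\; c_r + \frac{d_r - d_p + r - q - j}{2} \;=\; \frac{c_r + c_p + r - q - j}{2},$$
which matches the index in the theorem statement.

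There is no substantive obstacle: both inputs, Theorem \ref{main} and Lemma \ref{Mlemma}, are already at hand, so the proof is essentially a bookkeeping reconciliation of indices via the complementary dimension identity $c_r + d_r = mn$.
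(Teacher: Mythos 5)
Your proposal is correct and follows exactly the route the paper takes: the paper presents Theorem~\ref{Hfilthm} as an immediate consequence of Theorem~\ref{main} and Lemma~\ref{Mlemma}, and your substitution using $c_r+d_r=mn$ (so $d_r-d_p=c_p-c_r$) is the right bookkeeping to reconcile the indices. You have even implicitly corrected a typo in the displayed formula of Lemma~\ref{Mlemma}, where the $c_p$ in the subscript should read $c_r$, as is clear from (\ref{transF}).
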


The next two remarks elucidate the $\mc{O}_{\X}$-module structure of the Hodge filtration in some cases.

\begin{remark}\label{ClaudiuRemark}
After the appearance of an earlier version of this article, it was pointed to us by Claudiu Raicu	 that, in the case of non-square matrices, the Hodge filtration on the codimension local cohomology module has the following pleasing description in terms of elements annihilated by powers of the defining ideal. 

Let $0\leq q< n <m$, let $S=\Gamma (\X,\mc{O}_{\X})$, let $c=c_q$, and let $J=J_{q+1}$ be the determinantal ideal defining $\Z=\Z_q$. We have for all $k\geq 0$:
\begin{equation}\label{claudiuExt}
F_k\big(H^{c}_{\Z}\left(\mc{O}_{\X}^H\right)\big)=\big\{ h\in H^{c}_{\Z}\left(\mc{O}_{\X}\right) \mid J^{k+1}\cdot h=0\big\}.
\end{equation}
Indeed, by the proof of \cite[Proposition 3.1(i)]{huneke}, the right side of (\ref{claudiuExt}) is equal to $\operatorname{Ext}^c_S(S/J^{k+1},S)$, whose equivariant structure is calculated as a special case of \cite[Main Theorem]{regularity}. Comparing this to the equivariant description given in our Theorem \ref{Hfilthm} yields the desired result. 

For square matrices, the above argument can be used to show that, for all $0< q< n$, (\ref{claudiuExt}) is true for $k=0$, but is false for all $k\geq 1$. In both cases, it would be interesting to relate $\Ext$ to the Hodge filtration on the higher local cohomology modules in the spirit of \cite[Section C.7]{MP4}.
\end{remark}

\begin{remark}\label{HodgeIdealRemark}
One may express the Hodge filtration on each $\mc{Q}^H_p$ in terms of the Hodge ideals $I_k(\Z)$ \cite{MP1} for the determinant hypersurface $\Z=\Z_{n-1}$, which combined with Theorem \ref{main} provides an alternate expression for the Hodge filtration on local cohomology with determinantal support in square matrices. 

In our setting, the Hodge ideals $I_k(\Z)$ are defined to be the ideals that satisfy the following:
\begin{equation}\label{standard}
F_k(\mc{Q}^H_n)=I_k(\Z)\otimes \mc{O}_{\X}((k+1)\cdot \Z),\;\;\;\;k\geq 0,
\end{equation}
and they were calculated in \cite[Theorem 1.1]{perlmanraicu}.

Given $a,b\geq 0$ we write $I_{a\times b}$ for $\GL$-equivariant ideal in $S=\Gamma(\X,\mc{O}_{\X})$ generated by the irreducible subrepresentation $\bS_{(b^a)}\C^n\otimes \bS_{(b^a)}\C^n\subseteq S$ (see \cite{DEP}). The ideal $I_{a\times b}$ is the smallest $\GL$-equivariant ideal containing the $b$-th powers of the $a\times a$ minors of $(x_{i,j})$. For all $0\leq p\leq n=m$ and all $k\geq 0$ we have

\begin{equation}\label{hodgeqpideal}
F_k\big(\mc{Q}^H_p\big)=\left( \frac{I_k(\Z)}{ I_{(p+1)\times (k-(n-p)+2)}\cap I_k(\Z)}\right)\otimes \mc{O}_{\X}((k+1)\cdot \Z),
\end{equation}
\vspace{.5mm}

\noindent with the convention that $I_{a\times b}=0$ if $a>n$ and $I_{a\times b}=S$ if $b<0$. Indeed, (\ref{hodgeqpideal}) follows immediately from (\ref{standard}), (\ref{characterofqp}), and the equivariant description of $I_{a\times b}$ \cite[Theorem 4.1]{DEP}.

\end{remark}

\subsection{Generation level}\label{SecGenLev} Given a filtered $\D$-module $(M,F_{\bullet})$, we say that $F_{\bullet}$ is \defi{generated in level $q$} if
$$
F_{l}(\D) \cdot F_q(M)=F_{q+l}(M) \textnormal{\;\;for all\; $l\geq 0$},
$$
where $F_{\bullet}(\D)$ denotes the order filtration on $\D$. The \defi{generation level} of $(M,F_{\bullet})$ is defined to be the minimal $q$ such that $F_{\bullet}$ is generated in level $q$. We determine the generation level of each local cohomology module.

\begin{prop}
Let $0\leq q< p \leq n \leq m$ and $j\geq 0$. Let $s$ be minimal such that $D_{s}$ is a simple composition factor of $H^j_{\Z_q}(D_p)$. Then the generation level of the Hodge filtration on $H^j_{\Z_q}(\IC^H_{\Z_p})$ is $(c_{s}+c_p+s-q-j)/2$.

If $m=n$, then $s=0$, so the generation level is $(n^2+c_p-q-j)/2$.
\end{prop}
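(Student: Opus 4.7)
The plan is to combine Theorem~\ref{Hfilthm} and Lemma~\ref{Mlemma} with the explicit $\GL$-equivariant description (\ref{transF}) of the Hodge filtration on each $\IC^H_{\Z_r}(k)$, together with strictness of the Hodge filtration under morphisms in $\tn{MHM}$. Set $l_r:=(c_r+c_p+r-q-j)/2$. Lemma~\ref{Mlemma} identifies the $\GL$-equivariant structure of $F_k\big(H^j_{\Z_q}(\IC^H_{\Z_p})\big)$ as a disjoint union of filtered pieces coming from each composition factor, and by (\ref{start}) the $D_r$-contribution first appears at level $l_r$. The goal is to show that the generation level equals $\max_r l_r = l_s$, and, in the square case, that $s=0$.

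For the lower bound, the key observation is that the sets of dominant weights $W^r=\mathfrak{W}(D_r)$ are pairwise disjoint (Section~\ref{SecGLRep}), and the action of $\D$ preserves $\GL$-isotypic components. Consequently, acting by $\D$ on the $D_r$-part of any $F_k$ cannot produce weights lying in $W^{s}$ for $s\neq r$. Since the $D_s$-weights first appear at $F_{l_s}$, they cannot be generated from any $F_k$ with $k<l_s$, yielding generation level $\geq l_s$.

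For the upper bound, I would invoke \cite[Theorem~3.1]{perlmanraicu} (equivalent to (\ref{transF})), which asserts that the Hodge filtration on each pure summand $\IC^H_{\Z_r}(k_r)$ is generated in level $l_r$. Then strictness of the Hodge filtration with respect to the short exact sequences in a composition series of $H^j_{\Z_q}(\IC^H_{\Z_p})$ gives, by induction on the composition length, generation level $\leq \max_r l_r$. To show this maximum is attained at $r=s$, one computes
\[
l_r-l_{r+1}=\frac{(c_r-c_{r+1})-1}{2}=\frac{m+n-2r-2}{2}\geq 0\quad\tn{for all }r\leq n-1,
\]
so $l_r$ is non-increasing on $\{0,\ldots,q\}$ and the maximum is achieved at the smallest $r$ for which $D_r$ appears.

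Finally, for the square case $m=n$, I would verify $s=0$ using (\ref{locCohForm}): the contribution of $[D_r]$ to the generating function equals $t^{(p-q)^2}\binom{n-r}{p-r}_{t^2}\binom{p-r-1}{q-r}_{t^2}$. Each Gaussian binomial $\binom{a}{b}_{t^2}$ has full support $\{0,2,\ldots,2b(a-b)\}$ (positive coefficient at every admissible exponent), and the support of the product shrinks as $r$ increases; hence the $r=0$ support contains that for every $r>0$, so $D_0$ appears whenever some $D_r$ does. The step I expect to be most delicate is the inductive upper bound in the non-semisimple square-matrix case, where composition factors do not split off as $\D$-direct summands; one must rely on strict surjectivity of the quotient maps in a composition series rather than on a direct-sum decomposition, which is precisely what strictness of the Hodge filtration in $\tn{MHM}$ provides.
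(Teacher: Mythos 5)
Your upper-bound argument matches the paper's: by \cite[Section 4.2]{perlmanraicu} each pure composition factor $\IC^H_{\Z_r}(k_r)$ has generation level equal to its first nonzero level $l_r=(c_r+c_p+r-q-j)/2$, your computation $l_r-l_{r+1}=(m+n-2r-2)/2\geq 0$ is correct, and strictness of the Hodge filtration along a composition series then propagates the bound, giving generation level $\leq l_s$. Your combinatorial verification that $s=0$ when $m=n$, via the full support of the Gaussian binomials in (\ref{locCohForm}), is also valid; the paper reaches the same conclusion more structurally from the fact that $H^j_{\Z_q}(D_p)\in\tn{add}(Q)$ forces $D_0$ to be a composition factor whenever the module is nonzero.

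The lower bound, however, rests on a false premise. The Weyl algebra $\D$ does \emph{not} preserve $\GL$-isotypic components: already multiplication by $x_{1,1}\in\bS_{(1)}\C^m\otimes\bS_{(1)}\C^n$ moves between isotypic pieces of $S$ (the Weyl algebra is $\GL$-equivariant, not $\GL$-invariant). So ``acting by $\D$ on the $D_r$-part cannot produce weights in $W^s$'' does not follow from the disjointness of the sets $W^r$. What is true for $m\neq n$ is that $H^j_{\Z_q}(D_p)\cong\bigoplus_r D_r^{\oplus a_r}$ is a decomposition into $\D$-stable direct summands, so the generation level of the sum is the maximum over the summands; that salvages your argument in the semisimple case. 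But for $m=n$ the module lies in $\tn{add}(Q)$, and the copies of $D_r$ are composition factors rather than $\D$-direct summands, so there is no $\D$-stable ``$D_r$-part'' and your argument collapses precisely in the case you flagged as delicate only for the upper bound. The uniform argument, which is what the paper uses, is that a strict filtered surjection $M\twoheadrightarrow C$ forces the generation level of $M$ to be at least that of $C$; so it suffices that $D_s$ be a $\D$-\emph{quotient} of $H^j_{\Z_q}(D_p)$. This is automatic for $m\neq n$ by semisimplicity, and for $m=n$ it holds because $D_0$ is the cosocle of each $Q_r$. Without this correction, your lower bound is unjustified in the square-matrix case.
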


\begin{proof}
Let $g=(c_{s}+c_p+s-q-j)/2$. We first show that the generation level is at most $g$. It suffices to verify that the generation level of the induced Hodge filtration on each composition factor is at most $g$. By Theorem \ref{main} we are interested in $\IC^H_{\Z_r}((d_r-d_p+r-q-j)/2)$ for $0\leq r \leq q$. The Hodge filtration on $\IC^H_{\Z_r}((d_r-d_p+r-q-j)/2)$ has generation level $(c_{r}+c_p+{r}-q-j)/2$, the first nonzero level \cite[Section 4.2]{perlmanraicu}. Since $g\geq (c_{r}+c_p+{r}-q-j)/2$ for all $r\geq s$, we conclude that the generation level is at most $g$.

It remains to show that the generation level of the Hodge filtration is at least $g$. By (\ref{start}) and Theorem \ref{Hfilthm}, the induced Hodge filtration on $D_{s}$ starts in level $g$, so it suffices to show that $D_{s}$ is a quotient of $H^j_{\Z_q}(D_p)$. When $m\neq n$, $H^j_{\Z_q}(D_p)$ is a semi-simple $\D$-module, so $D_{s}$ is necessarily a quotient. If $m=n$, then $H^j_{\Z_q}(D_p)$ is an object of $\tn{add}(Q)$, so $D_0$ is a quotient and $s=0$. 
\end{proof}

\begin{example}\label{Ex3}
Let $m=5$, $n=3$, $p=2$, and $q=1$.	Using (\ref{locCohForm}) the nonzero local cohomology modules are expressed in the Grothendieck group of $\GL$-equivariant holonomic $\D$-modules as follows:
$$
\big[H^3_{\Z_1}( D_2)\big]=[D_1],\;\;\big[H^5_{\Z_1}( D_2)\big]=[D_1]+[D_0],\;\;\big[H^7_{\Z_1}( D_2)\big]=\big[H^9_{\Z_1}( D_2)\big]=[D_0].
$$
If $D_2$ underlies the pure Hodge module $\IC^H_{\Z_2}$, Theorem \ref{main} implies that the simple composition factors above underlie Hodge modules with the following weights (from left to right): $15$, $17$, $18$, $20$, $22$.
For $k\in \mathbb{Z}$ we have
$$
\mathfrak{W}\big(F_k\big(H^3_{\Z_1}\big( \IC^H_{\Z_2}\big)\big)\big)=\fD^1_{k-4},\;\;\;\mathfrak{W}\big(F_k\big(H^5_{\Z_1}\big( \IC^H_{\Z_2}\big)\big)\big)=\fD^1_{k-3}\sqcup \fD^0_{k-6},
$$
$$
\mathfrak{W}\big(F_k\big(H^7_{\Z_1}\big( \IC^H_{\Z_2}\big)\big)\big)=\fD^0_{k-5},\;\;\mathfrak{W}\big(F_k\big(H^9_{\Z_1}\big( \IC^H_{\Z_2}\big)\big)\big)=\fD^0_{k-4}.
$$
The induced filtrations on each of the simple composition factors start in the following levels: $4$, $3$, $6$, $5$, $4$, and the generation level of the Hodge filtration on each of the local cohomology modules is $4$, $6$, $5$, $4$, respectively.
\end{example}


\section*{Acknowledgments}

We are grateful to Claudiu Raicu for numerous valuable conversations, especially during the early stages of this project while work on \cite{perlmanraicu} was in progress. We also thank Claudiu for pointing out the alternate description of the Hodge filtration discussed in Remark \ref{ClaudiuRemark}. We thank an anonymous referee for carefully reading the manuscript, providing numerous comments, and bringing to our attention an error in the proof of an earlier version of Theorem \ref{main}.

\begin{bibdiv}
\begin{biblist}

\bib{Achar}{book}{
  title={Perverse sheaves and applications to representation theory},
  author={Achar, Pramod N},
  series={Mathematical Surveys and Monographs},
  volume={258},
  year={2021},
  publisher={American Mathematical Society}
}

\bib{bruns}{book}{
  title={Determinantal rings},
  author={Bruns, Winfried},
  author={Vetter, Udo},
  series={Lecture Notes in Mathematics}
  volume={1327},
  year={1988},
  publisher={Springer-Verlag, Berlin}
}

\bib{DEP}{article}{
   author={de Concini, Corrado},
   author={Eisenbud, David},
   author={Procesi, Claudio},
   title={Young diagrams and determinantal varieties},
   journal={Invent. Math.},
   volume={56},
   date={1980},
   number={2},
   pages={129--165},
   issn={0020-9910},
}

\bib{fulton}{book}{
  title={Intersection theory},
  author={Fulton, William},
  edition={2nd ed.}
  year={1998},
  publisher={Ergebnisse der Mathematik und ihrer Grenzgebiete. 3. Folge. A Series of
Modern Surveys in Mathematics [Results in Mathematics and Related Areas. 3rd Series. A Series of Modern Surveys
in Mathematics], vol. 2, Springer-Verlag, Berlin}
}


\bib{hotta2007d}{book}{
   author={Hotta, Ryoshi},
   author={Takeuchi, Kiyoshi},
   author={Tanisaki, Toshiyuki},
   title={$D$-modules, perverse sheaves, and representation theory},
   series={Progress in Mathematics},
   volume={236},
   note={Translated from the 1995 Japanese edition by Takeuchi},
   publisher={Birkh\"{a}user Boston, Inc., Boston, MA},
   date={2008},
   pages={xii+407},
}

\bib{huneke}{article}{
   author={Huneke, Craig},
   author={Koh, Jee},
   title={Cofiniteness and vanishing of local cohomology modules},
   journal={Math. Proc. Cambridge Philos. Soc.},
   volume={110},
   date={1991},
   number={3},
   pages={421--429},
   issn={0305-0041},
}

\bib{lHorincz2018iterated}{article}{
   author={L\H{o}rincz, Andr\'{a}s C.},
   author={Raicu, Claudiu},
   title={Iterated local cohomology groups and Lyubeznik numbers for determinantal rings},
   journal={Algebra \& Number Theory},
   volume={14},
   date={2020},
   number={9},
   pages={2533--2569},
}

\bib{categories}{article}{
    AUTHOR = {L\H{o}rincz, Andr\'{a}s C.}
    author= {Walther, Uli},
     TITLE = {On categories of equivariant {$D$}-modules},
   JOURNAL = {Adv. Math.},
  FJOURNAL = {Advances in Mathematics},
    VOLUME = {351},
      YEAR = {2019},
     PAGES = {429--478},
      ISSN = {0001-8708},
   MRCLASS = {14F10 (14L30 14M27 16G20)},
  MRNUMBER = {3952575},
MRREVIEWER = {P. E. Newstead},
}

\bib{MP1}{article}{
    AUTHOR = {Musta\c{t}\u{a}, Mircea},
    author= {Popa, Mihnea},
     TITLE = {Hodge ideals},
   JOURNAL = {Mem. Amer. Math. Soc.},
  FJOURNAL = {Memoirs of the American Mathematical Society},
    VOLUME = {262},
      YEAR = {2019},
    NUMBER = {1268},
      ISSN = {0065-9266},
      ISBN = {978-1-4704-3781-7; 978-1-4704-5509-5},
   MRCLASS = {14D07 (14F17 14J17 32S25)},
  MRNUMBER = {4044463},
MRREVIEWER = {Matthias Wendt},
       URL = {https://doi-org.proxy.queensu.ca/10.1090/memo/1268},
}

\bib{MPP2}{article}{
    label={MP20a}
    AUTHOR = {Musta\c{t}\u{a}, Mircea}
     author={Popa, Mihnea},
     TITLE = {Hodge ideals for {$\Bbb Q$}-divisors, {$V$}-filtration, and
              minimal exponent},
   JOURNAL = {Forum Math. Sigma},
  FJOURNAL = {Forum of Mathematics. Sigma},
    VOLUME = {8},
      YEAR = {2020},
     PAGES = {Paper No. e19, 41},
   MRCLASS = {14D07 (14F10 14J17 32S25)},
  MRNUMBER = {4089396},
       URL = {https://doi-org.proxy.queensu.ca/10.1017/fms.2020.18},
}

\bib{MP3}{article}{
    label={MP20b}
    AUTHOR = {Musta\c{t}\u{a}, Mircea}
    Author= {Popa, Mihnea},
     TITLE = {Hodge filtration, minimal exponent, and local vanishing},
   JOURNAL = {Invent. Math.},
  FJOURNAL = {Inventiones Mathematicae},
    VOLUME = {220},
      YEAR = {2020},
    NUMBER = {2},
     PAGES = {453--478},
      ISSN = {0020-9910},
   MRCLASS = {14F10 (14F17 14J17 32S25)},
  MRNUMBER = {4081135},
       URL = {https://doi-org.proxy.queensu.ca/10.1007/s00222-019-00933-x},
}

\bib{MP4}{article}{
  title={Hodge filtration on local cohomology, Du Bois complex, and local cohomological dimension},
  AUTHOR = {Musta\c{t}\u{a}, Mircea},
    Author= {Popa, Mihnea},
  journal={arXiv preprint arXiv:2108.05192},
  year={2021}
}

\bib{perlmanraicu}{article}{
  title={Hodge ideals for the determinant hypersurface},
  author={Perlman, Michael},
  author={Raicu, Claudiu},
  journal={Selecta Mathematica New Series},
  volume={27},
  number={1},
  pages={1--22},
  year={2021},
  publisher={Springer}
}

\bib{peterssteen}{book}{
   author={Peters, Chris A. M.},
   author={Steenbrink, Joseph H. M.},
   title={Mixed Hodge structures},
   series={Ergebnisse der Mathematik und ihrer Grenzgebiete. 3. Folge. A
   Series of Modern Surveys in Mathematics},
   volume={52},
   publisher={Springer-Verlag, Berlin},
   date={2008},
   pages={xiv+470},
   }

\bib{raicu2016characters}{article}{
  title={Characters of equivariant $\mathcal{D}$-modules on spaces of matrices},
  author={Raicu, Claudiu},
  journal={Compositio Mathematica},
  volume={152},
  number={9},
  pages={1935--1965},
  year={2016},
  publisher={London Mathematical Society}
}

\bib{regularity}{article}{
   author={Raicu, Claudiu},
   title={Regularity and cohomology of determinantal thickenings},
   journal={Proc. Lond. Math. Soc. (3)},
   volume={116},
   date={2018},
   number={2},
   pages={248--280},
   issn={0024-6115},
}

\bib{raicu2014locals}{article}{
   author={Raicu, Claudiu},
   author={Weyman, Jerzy},
   title={Local cohomology with support in generic determinantal ideals},
   journal={Algebra \& Number Theory},
   volume={8},
   date={2014},
   number={5},
   pages={1231--1257},
   issn={1937-0652},
}

\bib{raicu2016local}{article}{
  title={Local cohomology with support in ideals of symmetric minors and {P}faffians},
  author={Raicu, Claudiu},
  author={Weyman, Jerzy},
  journal={Journal of the London Mathematical Society},
  volume={94},
  number={3},
  pages={709--725},
  year={2016},
  publisher={Oxford University Press}
}

\bib{raicu2014local}{article}{
   author={Raicu, Claudiu},
   author={Weyman, Jerzy},
   author={Witt, Emily E.},
   title={Local cohomology with support in ideals of maximal minors and
   sub-maximal Pfaffians},
   journal={Adv. Math.},
   volume={250},
   date={2014},
   pages={596--610},
   issn={0001-8708},
}

\bib{gkz2}{article}{
  title={Hypergeometric Hodge modules},
  author={Reichelt, Thomas},
  author={Sevenheck, Christian},
  journal={arXiv preprint arXiv:1503.01004},
  year={2015}
}
\bib{gkz}{article}{
  title={Weight filtrations on GKZ-systems},
  author={Reichelt, Thomas},
  author={Walther, Uli},
  journal={arXiv preprint arXiv:1809.04247},
  year={2018}
}

\bib{saito89}{article}{
    AUTHOR = {Saito, Morihiko},
     TITLE = {Introduction to mixed {H}odge modules},
      NOTE = {Actes du Colloque de Th\'{e}orie de Hodge (Luminy, 1987)},
   JOURNAL = {Ast\'{e}risque},
  FJOURNAL = {Ast\'{e}risque},
    NUMBER = {179-180},
      YEAR = {1989},
     PAGES = {10, 145--162},
      ISSN = {0303-1179},
   MRCLASS = {32S35 (14C05 14C30 32J25)},
  MRNUMBER = {1042805},
}

\bib{saito90}{article}{
   author={Saito, Morihiko},
   title={Mixed Hodge modules},
   journal={Publ. Res. Inst. Math. Sci.},
   volume={26},
   date={1990},
   number={2},
   pages={221--333},
   issn={0034-5318},
}

\bib{schnell}{article}{
    AUTHOR = {Schnell, Christian},
     TITLE = {On {S}aito's vanishing theorem},
   JOURNAL = {Math. Res. Lett.},
  FJOURNAL = {Mathematical Research Letters},
    VOLUME = {23},
      YEAR = {2016},
    NUMBER = {2},
     PAGES = {499--527},
      ISSN = {1073-2780},
   MRCLASS = {14C30 (32L20 32S35)},
  MRNUMBER = {3512896},
MRREVIEWER = {Tsz On Mario Chan},
}

\bib{MR617466}{article}{
   author={Zelevinski\u{\i}, A. V.},
   title={The $p$-adic analogue of the Kazhdan-Lusztig conjecture},
   language={Russian},
   journal={Funktsional. Anal. i Prilozhen.},
   volume={15},
   date={1981},
   number={2},
   pages={9--21, 96},
   issn={0374-1990},
}

\end{biblist}

\end{bibdiv}

\end{document}